\newcommand{\nc}{\newcommand}
 \nc{\aff}{\mathfrak{aff} } \nc{\bb}{\mathfrak{b}}
\nc{\cc}{\mathfrak{c} }  \nc{\dd}{\mathfrak{d} }
 \nc{\ggo}{\mathfrak{g} }
 \nc{\hh}{\mathfrak{h} }  \nc{\ii}{\mathfrak{i} }
 \nc{\jj}{\mathfrak{j} }  \nc{\kk}{\mathfrak{k} }
\nc{\mm}{\mathfrak{m} }   \nc{\nn}{\mathfrak{n} }
\nc{\pp}{\mathfrak{p} }  \nc{\rr}{\mathfrak{r} } \nc{\sg}{\mathfrak{s} }
 \nc{\sog}{\mathfrak{so} }  \nc{\spg}{\mathfrak{sp}}
 \nc{\sug}{\mathfrak{su} }  \nc{\slg}{\mathfrak{sl}}
 \nc{\tg}{\mathfrak{t} }  \nc{\uu}{\mathfrak{u} }
 \nc{\vv}{\mathfrak{v} } \nc{\ww}{\mathfrak{w} }
 \nc{\zz}{\mathfrak{z} }
 \nc{\ggob}{\overline{\mathfrak{g}}}
\nc{\glg}{\mathfrak{gl} }
\nc{\pca}{\mathcal{P}} \nc{\nca}{\mathcal{N}}
 \nc{\vp}{\varphi} \nc{\ddt}{\frac{{\rm d}}{{\rm d}t}}
 \nc{\la}{\langle} \nc{\ra}{\rangle}
 \nc{\SO}{{\sf SO}} \nc{\Spe}{{\sf Sp}} \nc{\Sl}{{\sf Sl}}
 \nc{\SU}{{\sf SU}} \nc{\Or}{{\sf O}} \nc{\U}{{\sf U}}
 \nc{\Gl}{{\sf Gl}} \nc{\Se}{{\sf S}} \nc{\Cl}{{\sf Cl}}
 \nc{\Spin}{{\sf Spin}} \nc{\Pin}{{\sf Pin}}
 \nc{\RR}{{\mathbb R}} \nc{\HH}{{\mathbb H}} \nc{\CC}{{\mathbb C}}
 \nc{\ZZ}{{\mathbb Z}} \nc{\FF}{{\mathbb F}} \nc{\NN}{{\mathbb N}}
 \nc{\GG}{{\mathbb G}} \nc{\JJ}{{\mathbb J}} \nc{\II}{{\mathbb I}}
 \nc{\KK}{{\mathbb K}} \nc{\DD}{{\mathbb D}}
 \nc{\ad}{\operatorname{ad}} \nc{\Ad}{\operatorname{Ad}}
 \nc{\coad}{\operatorname{coad}} \nc{\ct}{\operatorname{T}}
 \nc{\rank}{\operatorname{rank}} \nc{\Irr}{\operatorname{Irr}}
 \nc{\End}{\operatorname{End}} \nc{\Aut}{\operatorname{Aut}}
 \nc{\Inn}{\operatorname{Inn}} \nc{\Der}{\operatorname{Der}}
 \nc{\Dera}{\operatorname{Dera}} \nc{\Auto}{\operatorname{Auto}}
 \nc{\GL}{\operatorname{GL}}
 \nc{\SL}{\operatorname{SL}}
 \nc{\coord}{\operatorname{coord}}
 \theoremstyle{plain}
 \newtheorem{teo}{Theorem}[section]
 \newtheorem{pro}[teo]{Proposition}
 \newtheorem{cor}[teo]{Corollary}
 \newtheorem{lm}[teo]{Lemma}
 \renewenvironment{proof}{\noindent \emph{Proof.}}{\hfill $\blacksquare$}
 \theoremstyle{remark}
 \newtheorem{rem}[teo]{Remark}
 \newtheorem{defi}[teo]{Definition}
 \newcommand{\R}{\mathbb R}
\newcommand{\Z}{\mathbb Z}
\newcommand{\mg}{\mathfrak g }
\newcommand{\mb}{\mathfrak b }
\newcommand{\mn}{\mathfrak n }
\newcommand{\mh}{\mathfrak h }
\newcommand{\mgg}{\mathfrak g }
\newcommand{\mpp}{\mathfrak p }
\newcommand{\tmn}{ \tilde{\mn} }
\newcommand{\tmm}{ \tilde{\mm} }
\nc{\rmc}{\textrm{\rm C}}
\newcommand{\lra}{\longrightarrow}
\begin{document}
\title[Nilradicals admitting symplectic structures]{Nilradicals of parabolic subalgebras admitting symplectic structures}

\author{Leandro Cagliero}
\address[L. Cagliero]{CONICET - Universidad Nacional de C\'ordoba, FAMAF, Av. Medina Allende s/n, 5000 C\'ordoba, C\'ordoba,  Argentina.}
\email{cagliero@famaf.unc.edu.ar}

\author{Viviana del Barco}
\address[V. del Barco]{CONICET - Universidad Nacional de Rosario, ECEN-FCEIA, Depto. de Matem\'a\-tica, Av. Pellegrini 250, 2000 Rosario, Santa Fe, Argentina.}
\email{delbarc@fceia.unr.edu.ar}

 \date{\today}

\begin{abstract} 
In this paper we describe all the nilradicals of 
parabolic subalgebras of split real simple Lie algebras admitting symplectic structures.

The main tools used to obtain this list are Kostant's description of the highest weight vectors
(hwv) of the cohomology of these nilradicals and some necessary conditions obtained
for the $\mgg$-hwv's of $H^2(\mn)$ for a finite dimensional real symplectic nilpotent Lie algebra $\mn$ 
with a reductive Lie subalgebra of derivations $\mgg$ acting on it. 
\end{abstract}

\thanks{This work was partially supported by SECyT-UNR, SECyT-UNC, CONICET and FONCyT. \\
{\it (2010) Mathematics Subject Classification}:  17B20, 17B30, 17B56, 53D05.\\
{\it Keywords: Nilpotent Lie algebras, symplectic structures, nilradicals of parabolic subalgebras.}
}


\maketitle

\section{Introduction}
Throughout this paper, Lie algebras are always finite dimensional over $\R$.

The problem of determining whether an arbitrary 
Lie algebra admits symplectic structures is difficult in general and 
only few convenient necessary and sufficient conditions are known. 
The problem is well understood only for some specific families of Lie algebras, 
for instance some subfamilies of filiform Lie algebras \cite{GJK,MI}, Heisenberg Lie algebras \cite{DT}, 
nilpotent Lie algebras associated with graphs \cite{PO-TI}, 
free nilpotent Lie algebras \cite{dB1}, among others.

Let  $\mn=\mn^0\supset\mn^1\supset\dots\supset\mn^{k-1}\supset\mn^{k}=0$ 
be a filtration of a nonabelian nilpotent Lie algebra, invariant under the 
action of a reductive subalgebra $\mgg$ of $\Der(\mn)$.
We say that this filtration is {\em accurate} if 
it verifies some compatibility condition with respect to closed and exact 2-forms 
of $\mn$ (see Definition \ref{def.accurate}). 
Any such filtration gives rise to $\mg$-submodules $\mm_j\subset \mn^*$ ($j=1,\dots,k$) 
that are compatible with the given filtration, and a family of projections 
$P_t:\Lambda^2 \mn^*\to \mm_t\wedge\mm_{k-t+1}$ which are associated 
to the decomposition $\Lambda^2\mn^* = \bigoplus_{ 1\le i\le j\le k} \mm_i\wedge \mm_j$.  
Our first result is Theorem \ref{thm.Criterio2} which shows that when $\mn$ or $\R\oplus\mn$ 
is symplectic and, for some $t$,  $\dim \mn^{k-t} + \dim \mn^{t-1}>\dim \mn$
(it is always true for $t=1$), then there is an irreducible 
$\mgg$-submodule $U$ of $H^2(\mn)$ such that $P_{t}(\sigma)\ne0$ for any $\sigma\in\Lambda^2\mn^*$ 
corresponding to a nonzero cohomology class in $U$.

This result, and Kostant's description of the module structure 
of the cohomology,
allow us to determine the set of all nilradicals of 
parabolic subalgebras of split real simple Lie algebras admitting symplectic structures as follows. 
As a first step we obtain in Theorem \ref{prop.H2_hwv} 
a very explicit description of the set of highest weight vectors
of $H^2(\mn)$ (here $\mn$ is the nilradical of the parabolic subalgebra $\mgg\ltimes\mn$) 
given by Kostant in his description of its $\mgg$-module structure.
Finally,  the list of symplectic nilradicals is obtained 
by applying Theorem \ref{thm.Criterio2} to the lower central series of $\mn$
and the Levi factor $\mgg$.

We prove in Theorem \ref{thm.class} that if $\mn$ is one of these nilradicals and either 
$\mn$ or $\R\oplus\mn$ admits a symplectic structure, then $\mn$ belongs to the following list.

\begin{enumerate}[1.]
 \item The abelian Lie algebras.
 \item The free 2-step nilpotent Lie algebra generated by 2 or 3 elements.
 \item The filiform Lie algebra of dimension 4: $[X_1,X_2]=X_3$, $[X_1,X_3]=X_4$.
 \item $\{X_i,Y_i,\,Z_{ij}:1\leq i ,  j\leq 2\}$, $[X_i,Y_j]=Z_{ij}$, $1\!\leq\! i ,  j\!\leq\! 2$.
 \item $\{X_i,Y_i,\,Z_{ij}:1\!\leq\! i\!\le\! j\!\leq\! n\}$, $[X_i,Y_j]=[X_j,Y_i]=Z_{ij}$, $1\!\leq\! i\!\le\! j\!\leq\! n$; $n=2,3$.
 \item $\{X_i,Y_i,\,Z_{ij}:1\!\leq\! i\! <\!  j\!\leq\! 3\}$, $[X_i,Y_j]=-[X_j,Y_i]=Z_{ij}$, $1\!\leq\! i\! <\!  j\!\leq\! 3$.
 \item $\R X\ltimes_{\ad_X}(\R^{n-1}\oplus\R^{n-1})$ 
with $
\ad_X=\left(\begin{smallmatrix}
0&0\\[1mm]
I&0
\end{smallmatrix}\right)
$.
\end{enumerate}
All these Lie algebras are 2-step nilpotent, except for the 3-step filiform Lie algebra.

The results here extend those in \cite{dB3} where the author  
obtains the list of symplectic nilradicals of 
Borel subalgebras of split real simple Lie algebras. The main tool there is a general result 
about symplectic nilpotent Lie algebras involving the spectral sequence arising from the filtration 
associated to its 
lower central series. 
This result is contained here, with different notation, as the particular case $t=1$ and $\mgg=0$ of Corollary \ref{cor.seriecent}. 

Taking into account how difficult is to prove the non-existence 
of symplectic structures, we think that Theorem \ref{thm.Criterio2} gives an interesting 
(and hopefully powerful) tool to attack this problem.

\section{Symplectic structures on  Lie algebras}
In this section we briefly recall some basic facts about Lie algebra cohomology and present two useful lemmas about symplectic Lie algebras

Let $\mgg$ be a real Lie algebra of dimension $m$. The Chevalley-Eilenberg complex of $\mgg$ is the differential complex
\begin{equation}\label{eq:complex}
0\lra\R \lra \mgg^*\stackrel{d_1}{\lra}\Lambda^2\mgg^*\stackrel{d_2}{\lra}\ldots \ldots\stackrel{d_{m-1}}{\lra} \Lambda^m\mg^*\lra 0,
\end{equation}
where we identify the exterior product $\Lambda^p\mgg^*$ with the space of skew-symmetric $p$-linear forms on $\mgg$. 
Thus each differential $d_p:\Lambda^{p}\mgg^*\lra \Lambda^{p+1}\mgg^*$ is defined by:
\begin{equation} d_p c\,(x_1,\ldots,x_{p+1})=\sum_{1\leq i<j\leq p+1}(-1)^{i+j-1}c([x_i,x_j],x_1,\ldots,\hat{x_i},\ldots,\hat{x_j},\ldots,x_{p+1}). \label{eq:diff}\end{equation}

The first differential $d_1$ coincides with the dual 
mapping of the Lie bracket $[\,,\,]:\Lambda^2\mgg\lra \mgg$ and 
$d=\bigoplus_p d_p$ is a derivation of the exterior algebra $\Lambda^*\mgg^*$ such that $d^2=0$. 
The cohomology of $(\Lambda^*\mgg^*,d)$ is called the Lie algebra cohomology of $\mgg$ (with real coefficients) and it is denoted by $H^*(\mgg)$.

A given $\omega\in\Lambda^*\mgg^*$ is said to be closed if $d\omega=0$ and 
it is said to be exact if $\omega=d\nu$ for some $\nu\in\Lambda^*\mgg^*$.
In particular, if $\omega$ is a 2-form in $\mgg$, that is $\omega\in\Lambda^2\mgg^*$, then 
$\omega$ is closed if and only if (see \eqref{eq:diff}) 
\begin{equation}\label{eq:closed}
\omega([U,V],W)+\omega([V,W],U)+\omega([W,U],V)=0,\quad \text{ for all }U,V,W\in\mgg,
\end{equation}
and $\omega$ is exact if and only if there is some $f\in\mgg^*$ such that 
$\omega(U,V)=-f([U,V])$ for all $U,V\in\mgg$.

A symplectic structure on $\mgg$ is a closed 2-form $\omega$ which is also nondegenerate. 
It is clear that symplectic Lie algebras are necessarily even dimensional. 
If $\dim\mgg=m$, then a closed 2-form $\omega$ is a symplectic structure on $\mgg$ 
if and only if $\omega^{m/2}\neq0$.

The lower central series of a Lie algebra $\mgg$ is $\{\rmc^j(\mgg)\}$, where for all $j\geq 0$, $\rmc^j(\mgg)$ is defined by
$$\rmc^0(\mgg)=\mg,\quad  \rmc^j(\mgg)=[\mg,\rmc^{j-1}(\mgg)],\;\; j\geq 1.
$$
The upper central series of $\mgg$ is $\{\rmc_j(\mgg)\}$, where for all $j\geq 0$, $\rmc_j(\mgg)$ is given by
$$\rmc_0(\mgg)=0,\quad  \rmc_j(\mgg)=\{X\in\mg:\,[X,\mg]\subset \rmc_{j-1}(\mgg)\},\;\; j\geq 1.
$$
We notice that $\rmc^1(\mgg)=[\mg,\mg]$ is the commutator of $\mg$ and $\rmc_1(\mgg)$ is the center of $\mg$.

The following lemma gives an obstruction for a Lie algebra to admit symplectic structures (see \cite[Lemma 8.1]{BC}).

\begin{lm}\label{lm1} Let $\mgg=\rmc^0(\mgg)\supset\rmc^1(\mgg)\supset\dots$ and 
$0=\rmc_0(\mgg)\subset\rmc_1(\mgg)\subset\dots$ be, respectively, the 
upper and lower central series of a Lie algebra $\mgg$.
If either $\mgg$ or $\R\oplus \mgg$ is a symplectic Lie algebra then  
\begin{equation} \label{eq.lm1}
\dim\rmc_j(\mgg)+\dim \rmc^j(\mgg)\leq \dim \mgg
\end{equation}
for all $j\ge0$.
\end{lm}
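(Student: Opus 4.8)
The plan is to reduce the inequality to an orthogonality statement for the symplectic form and then prove that statement by induction, the essential new ingredient being a purely Lie-theoretic interaction between the two central series. Assume first that $\mgg$ itself carries a symplectic form $\omega$. Since $\omega$ is nondegenerate, every subspace $W\subseteq\mgg$ has $\dim W^{\perp}=\dim\mgg-\dim W$, where $W^{\perp}=\{v\in\mgg:\omega(v,w)=0\text{ for all }w\in W\}$. Thus \eqref{eq.lm1} follows at once if I can establish
\begin{equation}\label{eq.orth}
\omega\bigl(\rmc^j(\mgg),\rmc_j(\mgg)\bigr)=0\qquad\text{for all }j\ge0,
\end{equation}
for then $\rmc^j(\mgg)\subseteq\rmc_j(\mgg)^{\perp}$ gives $\dim\rmc^j(\mgg)\le\dim\mgg-\dim\rmc_j(\mgg)$.

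The key auxiliary fact, proved without reference to $\omega$, is the inclusion
\begin{equation}\label{eq.bracket}
[\rmc^i(\mgg),\rmc_j(\mgg)]\subseteq\rmc_{j-i-1}(\mgg),
\end{equation}
with the convention $\rmc_k(\mgg)=0$ for $k\le0$; I would prove it by induction on $i$. The case $i=0$ is exactly the defining property $[\mgg,\rmc_j(\mgg)]\subseteq\rmc_{j-1}(\mgg)$ of the upper central series, and the inductive step follows from the Jacobi identity via $[\rmc^i,\rmc_j]=[[\mgg,\rmc^{i-1}],\rmc_j]\subseteq[\mgg,[\rmc^{i-1},\rmc_j]]+[\rmc^{i-1},[\mgg,\rmc_j]]\subseteq[\mgg,\rmc_{j-i}]+[\rmc^{i-1},\rmc_{j-1}]\subseteq\rmc_{j-i-1}$. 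In particular, taking $i=j-1$ yields $[\rmc^{j-1}(\mgg),\rmc_j(\mgg)]=0$.

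Now I would prove \eqref{eq.orth} by induction on $j$, the case $j=0$ being trivial since $\rmc_0(\mgg)=0$. For the step, write a generator of $\rmc^j(\mgg)$ as $x=[U,z]$ with $U\in\mgg$ and $z\in\rmc^{j-1}(\mgg)$, and let $y\in\rmc_j(\mgg)$. The closedness identity \eqref{eq:closed} applied to $U,z,y$ gives $\omega([U,z],y)=-\omega([z,y],U)-\omega([y,U],z)$. The first summand vanishes because $[z,y]\in[\rmc^{j-1}(\mgg),\rmc_j(\mgg)]=0$ by \eqref{eq.bracket}, and the second vanishes because $[y,U]\in\rmc_{j-1}(\mgg)$ and $z\in\rmc^{j-1}(\mgg)$, so it is $0$ by the inductive hypothesis. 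Hence $\omega(x,y)=0$, proving \eqref{eq.orth} and therefore \eqref{eq.lm1} when $\mgg$ is symplectic.

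Finally, for the case in which $\R\oplus\mgg$ is symplectic, I would transfer the result along the two central series of a direct sum: for $j\ge1$ one has $\rmc^j(\R\oplus\mgg)=\rmc^j(\mgg)$ and $\rmc_j(\R\oplus\mgg)=\R\oplus\rmc_j(\mgg)$, since the $\R$ factor is central and abelian. Applying the already established inequality to $\R\oplus\mgg$ and cancelling the single dimension contributed by $\R$ recovers \eqref{eq.lm1} for $\mgg$ (the case $j=0$ being trivial). The main obstacle, and the real content of the argument, is precisely identifying \eqref{eq.bracket}: the closedness relation by itself is circular, since re-expanding the remaining bracket through \eqref{eq:closed} merely returns $\omega(x,y)$, so one must inject the independent, purely Lie-theoretic vanishing $[\rmc^{j-1}(\mgg),\rmc_j(\mgg)]=0$ to break the loop.
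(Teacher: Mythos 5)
Your proof is correct. It is worth noting that the paper itself contains no proof of Lemma \ref{lm1}: the statement is quoted from \cite[Lemma 8.1]{BC}, so what you have produced is a self-contained proof of the cited result rather than a variant of an argument in the paper. Your three-step route is the natural one and every step checks out: the purely Lie-theoretic inclusion $[\rmc^i(\mgg),\rmc_j(\mgg)]\subseteq\rmc_{j-i-1}(\mgg)$ is proved by a valid induction on $i$ (the inductive hypothesis is quantified over all $j$, so it may legitimately be invoked at both $j$ and $j-1$ in the Jacobi step); the $\omega$-orthogonality $\omega\bigl(\rmc^j(\mgg),\rmc_j(\mgg)\bigr)=0$ follows by induction on $j$, where the specialization $[\rmc^{j-1}(\mgg),\rmc_j(\mgg)]=0$ is exactly what keeps the closedness identity \eqref{eq:closed} from being used circularly, as you correctly point out; and nondegeneracy converts orthogonality into \eqref{eq.lm1}, since $\rmc^j(\mgg)\subseteq\rmc_j(\mgg)^{\perp}$ gives $\dim\rmc^j(\mgg)\le\dim\mgg-\dim\rmc_j(\mgg)$ (linearity lets you verify orthogonality only on the spanning brackets $[U,z]$ of $\rmc^j(\mgg)$, which is what you do). The reduction of the case where $\R\oplus\mgg$ is symplectic, via $\rmc^j(\R\oplus\mgg)=\rmc^j(\mgg)$ and $\rmc_j(\R\oplus\mgg)=\R\oplus\rmc_j(\mgg)$ for $j\ge1$ and cancellation of the one extra dimension, is also correct; this case genuinely requires the separate treatment you give it, since $\R\oplus\mgg$ can be symplectic when $\mgg$ is not (for instance when $\mgg$ is odd-dimensional).
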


We now assume that $\mn$ is a $k$-step nilpotent Lie algebra, 
that is $\rmc^{k}(\mn)=0$ and $\rmc^{k-1}(\mn)\neq 0$.

Let $X\in\rmc^{i}(\mn)$ and $Y\in\rmc^{j}(\mn)$. If 
$\omega=d\sigma\in\Lambda^2\mn^*$ is an exact 2-form and $i+j\ge k-1$, 
then $[X,Y]\in \rmc^{i+j+1}(\mn)=0$ and thus 
\begin{equation}\label{eq.exact_form}
 \omega(X,Y)=-\sigma([X,Y])=0.
\end{equation}
On the other hand, if 
$\omega\in\Lambda^2\mn^*$ is a closed 2-form and $i+j\ge k$, then \cite[Proposition 8.2]{BC}
\begin{equation}\label{eq.closed_form}
 \omega(X,Y)=0.
\end{equation}

These observations can be expressed as follows. 
For each  $j=1,\dots,k$, let $\mm_j\subset \mn^*$ be subspaces defined inductively so that 
\begin{equation}\label{eq.m_j2}
 \mm_{1} \oplus \mm_{2} \oplus\dots \oplus \mm_{j}=\{f\in\mn^*:f|_{\rmc^{j}(\mn)}=0\}.
\end{equation}
In particular, $\mn^*=\mm_{1} \oplus \mm_{2} \oplus\dots \oplus \mm_{k}$ and thus 
\[
\Lambda^2\mn^* = \bigoplus_{ 1\le i\le j\le k} \mm_i\wedge \mm_j
\]
where $\mm_i\wedge \mm_j$ denotes the skew-symmetrization of $\mm_i\otimes \mm_j$, that is
$2f(X,Y)=f(X,Y)-f(Y,X)$ if $f\in\mm_i\otimes\mm_j$ and $X,Y\in\mn$.

The following lemma generalizes \cite[Lemma 2.8]{BE-GO} and is a straightforward consequence of 
\eqref{eq.exact_form} and \eqref{eq.closed_form}.

\begin{lm} \label{lm.kerim} Let $\mn$ be a $k$-step nilpotent Lie algebra
and let $\mm_j$, $1\le j\le k$, be as in \eqref{eq.m_j2}. Then
\begin{equation}\label{eq.kernel}
\textrm{\rm ker}(d:\Lambda^2 \mn^*\to \Lambda^3\mn^*)\subset 
\bigoplus_{\begin{smallmatrix} i+j\le k+1 \\[1mm] 1\le i\le j\le k\end{smallmatrix}} \mm_i\wedge \mm_j
\end{equation}
and
\begin{equation}\label{eq.exactas}
\textrm{\rm image}(d:\mn^*\to \Lambda^2\mn^*)\subset 
\bigoplus_{\begin{smallmatrix} i+j\le k \\[1mm] 1\le i\le j\le k\end{smallmatrix}} \mm_i\wedge \mm_j.
\end{equation} 
\end{lm}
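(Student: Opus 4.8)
The plan is to translate the two pointwise vanishing statements \eqref{eq.exact_form} and \eqref{eq.closed_form} into statements about the individual components $\mm_i\wedge\mm_j$, by first setting up an explicit dual description of the decomposition $\mn^*=\mm_1\oplus\dots\oplus\mm_k$. Concretely, I would fix a basis of each $\mm_j$, concatenate them into a basis of $\mn^*$, pass to the dual basis of $\mn$, and let $V_i\subset\mn$ be the span of the dual vectors associated to $\mm_i$. By construction $\mn=V_1\oplus\dots\oplus V_k$, every functional in $\mm_i$ vanishes on each $V_l$ with $l\neq i$, and $\mm_i\wedge\mm_j$ is exactly the space of $2$-forms supported on $V_i\times V_j$; in particular the component $P_{ij}(\omega)$ of any $\omega\in\Lambda^2\mn^*$ is recovered from the values $\omega(X,Y)$ with $X\in V_i$, $Y\in V_j$.

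The geometric fact to record next is that this decomposition is adapted to the lower central series. Since by \eqref{eq.m_j2} one has $\mm_1\oplus\dots\oplus\mm_j=(\rmc^j(\mn))^\perp$, taking annihilators in $\mn$ and using biduality yields $\rmc^j(\mn)=V_{j+1}\oplus\dots\oplus V_k$, and in particular $V_i\subset\rmc^{i-1}(\mn)$ for every $i$. This is the one place where the construction does real work, and it is also what makes the argument independent of the choice of complements $\mm_j$ satisfying \eqref{eq.m_j2}: any such choice arises from some basis of $\mn^*$, so the associated $V_i$ always refine the filtration in the stated way.

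With this in place the two inclusions follow by index bookkeeping. For \eqref{eq.kernel}, let $\omega$ be closed and take $i+j\ge k+2$. For $X\in V_i\subset\rmc^{i-1}(\mn)$ and $Y\in V_j\subset\rmc^{j-1}(\mn)$ we have $(i-1)+(j-1)\ge k$, so \eqref{eq.closed_form} gives $\omega(X,Y)=0$; hence $P_{ij}(\omega)=0$ and $\omega$ lies in the sum over $i+j\le k+1$. For \eqref{eq.exactas} I would argue identically with $\omega$ exact, invoking \eqref{eq.exact_form}, whose weaker hypothesis $(i-1)+(j-1)\ge k-1$ already holds once $i+j\ge k+1$; this shifts the threshold down by one and forces $\omega$ into the sum over $i+j\le k$.

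I do not expect a serious obstacle here: the entire content is the single observation that $V_i$ sits inside $\rmc^{i-1}(\mn)$ rather than $\rmc^i(\mn)$, which is precisely what produces the offset between the two bounds $k+1$ and $k$. The only point that genuinely deserves care is the setting up of the $V_i$ and the verification that $P_{ij}$ corresponds to restriction to $V_i\times V_j$, since everything afterwards is a direct application of the two inequalities already established.
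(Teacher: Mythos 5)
Your proof is correct and is essentially the paper's own argument: the paper gives no details, stating the lemma as a straightforward consequence of \eqref{eq.exact_form} and \eqref{eq.closed_form}, and your construction of the dual complements $V_i$ with $\rmc^j(\mn)=V_{j+1}\oplus\dots\oplus V_k$ (hence $V_i\subset\rmc^{i-1}(\mn)$) is exactly the bookkeeping needed to make that deduction explicit. The index shift $i+j\ge k+2 \Rightarrow (i-1)+(j-1)\ge k$ versus $i+j\ge k+1 \Rightarrow (i-1)+(j-1)\ge k-1$ correctly produces the two bounds $k+1$ and $k$, so nothing is missing.
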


\section{Reductive Lie algebras acting on symplectic nilpotent Lie algebras}

In this section, $\mn$ is a nilpotent Lie algebra and $\text{Der}(\mn)$
is its Lie algebra of derivations.

Let 
$\mn=\mn^0\supsetneq\mn^1\supsetneq\dots\supsetneq\mn^{k-1}\supsetneq\mn^{k}=0$
be a filtration of $\mn$, that is 
$$
[\mn^i,\mn^j]\subset\mn^{i+j+1},\qquad 0\le i,j\le k,
$$
and let $\mgg$ be a reductive Lie subalgebra of $\text{Der}(\mn)$, 
acting reductively on $\mn$ and preserving the filtration.

As we did with the lower central series, we introduce, for each  $j=1,\dots,k$, 
$\mgg$-submodules $\mm_j\subset \mn^*$ defined inductively so that 
\begin{equation}\label{eq.m_j}
 \mm_{1} \oplus \mm_{2} \oplus\dots \oplus \mm_{j}=\{f\in\mn^*:f|_{\mn^j}=0\}.
\end{equation}
For each $t=1,\dots,\lceil k/2\rceil$ let 
\begin{equation}\label{eq.definition_P}
 P_t:\Lambda^2 \mn^*\to \mm_t\wedge\mm_{k-t+1}
\end{equation}
be the projection with respect to the decomposition 
$\Lambda^2\mn^*= \bigoplus_{1\le i\le j\le k} \mm_i\wedge \mm_j$. 
It is clear that $P_t$ is a $\mgg$-module morphism.

\begin{defi}\label{def.accurate}
We say that a filtration $\mn=\mn^0\supsetneq\mn^1\supsetneq\dots\supsetneq\mn^{k-1}\supsetneq\mn^{k}=0$
is {\em accurate} if $\omega(X,Y)=0$ for all $X\in\mn^i$, $Y\in \mn^j$ in the following two cases:
(i) when $\omega$ is an exact 2-form and $i+j\geq k-1$, 
(ii) when $\omega$ is a closed 2-form and $i+j\geq k$.
\end{defi}

\begin{rem}\label{rmk.accurate} We notice the following facts.
\begin{enumerate}[(1)] 
\item The  lower central series is accurate, see \eqref{eq.exact_form} and \eqref{eq.closed_form}.
\item\label{item.abelian} There are accurate filtrations other than the lower central series: 
for instance if $\mn$ is abelian, the only accurate filtrations are the lower central series and 
$\mn^0=\mn\supsetneq \mn^1\supsetneq \mn^2=0$ with $\dim\mn^1=1$.
Also, if $\mn=\R T\oplus \langle X,Y,Z\rangle $, with $[X,Y]=Z$,  then 
$\mn=\mn^0\supsetneq\langle Z,T\rangle\supsetneq\langle T\rangle\supsetneq\mn^{3}=0$ is
accurate.
\item\label{item.desc} If $\mn=\mn^0\supsetneq\mn^1\supsetneq\dots\supsetneq\mn^{k-1}\supsetneq\mn^{k}=0$
is accurate and $\mm_j$,  $j=1,\dots,k$, are defined as above, 
then \eqref{eq.kernel} and \eqref{eq.exactas} hold.
\item \label{item.accurate}If $\tilde\mn=\R T\oplus\mn $ and $\mn=\mn^0\supsetneq\mn^1\supsetneq\dots\supsetneq\mn^{k-1}\supsetneq\mn^{k}=0$
is an accurate
filtration of a nonabelian Lie algebra $\mn$ such that $\mn^1=\rmc^1(\mn)$, 
then
\[
 \tilde\mn^j=
 \begin{cases}
  \R T \oplus\mn^j,&\text{ if $j=1,\dots,t$;}\\
  \mn^{j},&\text{ if $j=t+1,\dots,k$} 
 \end{cases}
\] 
defines an accurate filtration of $\tilde\mn$ for all $t=1,\dots,\lceil k/2\rceil$, of the same length $k$. 
Indeed, 
on the one hand it easy to check that $\omega(X,Y)=0$ in case (i) of Definition \ref{def.accurate}.
On the other hand, assume that  $\omega$ is a closed 2-form in $\Lambda^2\tilde\mn^*$ and let 
$T^*\in\tmn^*$ be defined by $T^*(T)=1$ and $T^*|_{\mn}=0$. 
Then 
 $\omega=T^*\wedge \xi +\omega_0$ with $\omega_0$ a closed 2-form in $\Lambda^2\mn^*$ and 
 $\xi\in\mn^*$ with $\xi|_{\rmc^1(\mn)}=0$ (in particular $\xi|_{\mn^j}=0$ for $j\ge1$).
Since the filtration of $\mn$ is accurate, it is now straightforward to see that if 
$X\in\tilde\mn^i$, $Y\in\tilde\mn^j$ with $i+j\geq k$, then  $\omega(X,Y)=0$.
\end{enumerate}
\end{rem}

\begin{teo}\label{thm.Criterio2} Let $\mn$ be a  nonabelian nilpotent Lie algebra and let  
$\mn=\mn^0\supsetneq\mn^1\supsetneq\dots\supsetneq\mn^{k-1}\supsetneq\mn^{k}=0$ be an accurate filtration of $\mn$ invariant under the action of a reductive subalgebra $\mgg\subset \Der(\mn)$.
Assume that either 
\begin{enumerate}[a)]
\item $\mn$ is symplectic, or
\item $\R \oplus \mn$ is symplectic and  $\mn^1=\rmc^1(\mn)$.
\end{enumerate}
Then, for any $t=1,\dots,\lceil k/2\rceil$ such that
\begin{equation}\label{eq:condt}
\dim\mn^{k-t} + \dim\mn^{t-1}>\dim \mn,
\end{equation}
(in particular for $t=1$)
there is an irreducible 
$\mgg$-submodule $U$ of $H^2(\mn)$ such that $P_{t}(\sigma)\ne0$ for any $\sigma\in\Lambda^2\mn^*$  corresponding to a nonzero cohomology class in $U$.
\end{teo}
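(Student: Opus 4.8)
The plan is to use that the block $\mm_t\wedge\mm_{k-t+1}$ sits at total filtration degree $t+(k-t+1)=k+1$, exactly one above the top degree reached by exact $2$-forms. By \eqref{eq.exactas} of Lemma \ref{lm.kerim} (applicable here by item (\ref{item.desc}) of Remark \ref{rmk.accurate}), every exact $2$-form lies in $\bigoplus_{i+j\le k}\mm_i\wedge\mm_j$, so $P_t$ annihilates it; hence $P_t$ descends to a $\mgg$-module morphism $\bar P_t\colon H^2(\mn)\to\mm_t\wedge\mm_{k-t+1}$. Since $\mgg$ acts reductively, $H^2(\mn)$ is a direct sum of irreducibles, and once I know $\bar P_t\ne0$ I may pick an irreducible summand $U$ on which $\bar P_t$ does not vanish; as $\ker(\bar P_t|_U)$ is a submodule of the irreducible $U$, it is $0$, which is exactly the assertion $P_t(\sigma)\ne0$ for every $\sigma$ representing a nonzero class of $U$. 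Thus everything reduces to exhibiting one closed, non-exact $2$-form with nonzero $P_t$.

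For a) the candidate is the symplectic form $\omega$ itself. I fix a graded refinement $\mn=V_1\oplus\dots\oplus V_k$ with $\mn^{j-1}=V_j\oplus\dots\oplus V_k$, so $\mm_j\cong V_j^*$ and $P_t(\omega)$ is the block of $\omega$ pairing $V_t$ with $V_{k-t+1}$. Assume $P_t(\omega)=0$. Closedness and \eqref{eq.kernel} give $\omega(V_s,V_j)=0$ whenever $s+j>k+1$; the only pair with $s\ge k-t+1$, $j\ge t$ and $s+j\le k+1$ is $(s,j)=(k-t+1,t)$, which is killed by the hypothesis $P_t(\omega)=0$. Hence $\omega(V_s,V_j)=0$ for all $s\ge k-t+1$, $j\ge t$, i.e. $\omega(\mn^{k-t},\mn^{t-1})=0$. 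Reading the nondegenerate $\omega$ as an isomorphism $\mn\to\mn^*$, this forces $\omega(\mn^{k-t})\subseteq\{f\in\mn^*:f|_{\mn^{t-1}}=0\}$, whence $\dim\mn^{k-t}+\dim\mn^{t-1}\le\dim\mn$, contradicting \eqref{eq:condt}. So $P_t(\omega)\ne0$. Note this already covers $t=1$.

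For b) I run the same argument on $\tilde\mn=\R T\oplus\mn$, equipped with the length-$k$ accurate filtration of item (\ref{item.accurate}) of Remark \ref{rmk.accurate}. First \eqref{eq:condt} transfers: $\dim\tilde\mn^{t-1}=\dim\mn^{t-1}+1$, $\dim\tilde\mn^{k-t}\ge\dim\mn^{k-t}$ and $\dim\tilde\mn=\dim\mn+1$. Writing a symplectic form as $\omega=T^*\wedge\xi+\omega_0$ with $\omega_0\in\Lambda^2\mn^*$ closed and $\xi\in\mm_1$, the aim is $P_t(\omega_0)\ne0$. For $t\ge2$ the extra covector $T^*$ lies in $\tilde\mm_{t+1}$, so $T^*\wedge\xi$ sits in a block disjoint from the target $\tilde\mm_t\wedge\tilde\mm_{k-t+1}$ and one checks $\tilde P_t(\omega_0)=P_t(\omega_0)$; thus the case-a) computation applied to $\tilde\mn$ yields $P_t(\omega_0)=\tilde P_t(\omega)\ne0$. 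In each case the resulting form is non-exact (exact forms are killed by $P_t$), so it represents a nonzero class and $\bar P_t\ne0$, completing the reduction of the first paragraph.

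The main obstacle is the low-index case $t=1$ of b), where inserting $T^*$ can meet the target block (when $k=2$) and the clean block-disjointness fails. Here I would argue by hand: $P_1(\cdot)$ is the $V_1$--$V_k$ block, and $P_1(\omega_0)=0$ forces $V_k=\mn^{k-1}$ into the radical of $\omega_0$; since $\xi$ annihilates $\rmc^1(\mn)\supseteq\mn^{k-1}$ and $T^*$ vanishes on $\mn$, the nonzero subspace $\mn^{k-1}$ then lies in the radical of the nondegenerate form $\omega$ on $\tilde\mn$, which is impossible because $\mn$ is nonabelian. This settles $\bar P_t\ne0$ in the remaining case and, via the first paragraph, proves the theorem.
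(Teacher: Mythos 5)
Your proposal is correct, and its skeleton is the paper's: reduce everything to exhibiting one closed $2$-form with nonzero $P_t$-component, obtain it in case a) from nondegeneracy of $\omega$ plus the dimension count (your block computation with the graded refinement $V_1\oplus\dots\oplus V_k$ is the same argument the paper runs via the isomorphism $\psi(X)=\omega(X,\cdot)$, only treated uniformly in $t$), and pass to $\tilde\mn=\R T\oplus\mn$ in case b). The differences are worth recording. In the reduction you descend $P_t$ to $\bar P_t\colon H^2(\mn)\to\mm_t\wedge\mm_{k-t+1}$ and split $H^2(\mn)$ into irreducibles, whereas the paper decomposes the $\mgg$-submodule of $\Lambda^2\mn^*$ generated by $\omega$ and only afterwards projects to cohomology; these are equivalent, yours slightly leaner. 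The substantive divergence is case b). The paper applies its case a) to $\tilde\mn$, gets an irreducible $U\subset H^2(\tilde\mn)$, and then argues $U\subset H^2(\mn)$ using the choice $\tilde\mm_t=\R T^*\oplus\mm_t$ together with the claim $\tilde P_t(T^*\wedge\mm_1)=0$. But, as your write-up implicitly notices, for the filtration of Remark \ref{rmk.accurate}(\ref{item.accurate}) the defining property \eqref{eq.m_j} forces $T^*\in\tilde\mm_{t+1}$, not $\tilde\mm_t$ (the annihilator of $\tilde\mn^t=\R T\oplus\mn^t$ cannot contain $T^*$); with the correct placement the claim $\tilde P_t(T^*\wedge\mm_1)=0$ fails exactly in the case $t=1$, $k=2$, and in addition the paper's conclusion ``$U$ is a submodule of $H^2(\mn)$'' needs the extra remark that an irreducible submodule of a direct sum may embed diagonally, so one should replace $U$ by its projection onto $H^2(\mn)$. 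Your route --- splitting $\omega=T^*\wedge\xi+\omega_0$ with $\xi\in\mm_1$, proving $P_t(\omega_0)\ne0$ by block disjointness when $t\ge2$ (or $k\ge 3$), and by the radical argument (otherwise the nonzero subspace $\mn^{k-1}$ would lie in the radical of the nondegenerate $\omega$) when $t=1$ --- sidesteps both points, since all the module theory then takes place inside $H^2(\mn)$ itself. So your proof is not only correct but, in case b), more careful than the paper's own write-up.
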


\begin{proof}
We first assume that $\mn$ is symplectic and let 
$\omega$ be a symplectic form. Let $\mm_j$, $1\le j\le k$, be as in \eqref{eq.m_j}.
Since the filtration is accurate, it follows from Remark \ref{rmk.accurate}(\ref{item.desc}) that
\begin{equation}\label{eq:wij}
\omega
=\sum_{\begin{smallmatrix} i+j\le k+1 \\[1mm] 1\le i\le j\le k\end{smallmatrix}}\omega_{i,j}, 
\quad 
\omega_{i,j}\in\mm_i\wedge \mm_j.
\end{equation}
Recall that the map $\psi:\mn\lra\mn^*$ defined as $\psi(X)=\omega(X,\;\cdot\;)$ is an isomorphism. 
Let $t=1,\dots,\lceil k/2\rceil$ be as in the hypothesis and let $X \in\mn^{k-t}$.
Then 
\begin{equation*}
\psi(X)= 
\sum_{\begin{smallmatrix} i+j\le k+1 \\[1mm] k-t< i\le j\le k \end{smallmatrix}}\omega_{i,j}(X,\;\cdot\;)\;- 
\sum_{\begin{smallmatrix} i+j\le k+1 \\[1mm] 1\le i\le j\le k,\; k-t< j\end{smallmatrix}}\omega_{i,j}(\;\cdot\;,X),
\end{equation*}
since the annihilator of $\mn^{k-t}$ is $\mm_1\oplus\mm_2\oplus\cdots\oplus\mm_{k-t}$.
Since $t\le k/2$, the first sum is empty. 
In the second sum $j\ge k+1-t$ and thus
\begin{equation*}
\psi(X)= - 
\sum_{j=k+1-t}^k\sum_{i=1}^{t}\omega_{i,j}(\;\cdot\;,X).
\end{equation*}
If in addition  $Y\in\mn^{t-1}$, $\omega_{i,j}(Y,X)=0$ for all $i\leq t-1$ and therefore
\begin{equation*}
\psi(X)(Y)= - \sum_{j=k+1-t}^k\sum_{i=t}^{k+1-j}\omega_{i,j}(Y,X).
\end{equation*}
The only pair $(i,j)$ satisfying $t\le i \le k+1-j$ and $k+1-t \le j \le k$ is $(i,j)=(t,k+1-t)$ and 
therefore
\begin{equation}\label{eq:wt}
\psi(X)(Y)=-\omega_{t,k+1-t}(Y,X) \quad \text{for all }X \in\mn^{k-t},\; Y\in\mn^{t-1}.
\end{equation}
We will show now that $P_{t}(\omega)\ne0$.
Assume, on the contrary, that $\omega_{t,k+1-t}=P_{t}(\omega)=0$. Then \prettyref{eq:wt} implies that
\[
\begin{array}{rl}
\omega(X,\;\cdot\;)=0, &\text{ if } t=1;\\
\psi(\mn^{k-t})\subset\mm_1\oplus\mm_2\oplus\cdots\oplus\mm_{t-1},  &\text{ if } t\geq 2.
\end{array} 
\]
The case corresponding to $t=1$ is a contradiction since $\omega$ is nondegenerate. 
In the case $t\geq 2$, since $\psi$ is injective, one would obtain
$$\dim \mn^{k-t}
\leq \dim(\mm_{1}+\mm_2+\ldots +\mm_{t-1})
= \dim\mn -\dim\mn^{t-1}$$ 
which is a contradiction to \prettyref{eq:condt}. 
Thus, we have proved that $P_t(\omega)\neq 0$.

Let $V$ be the $\mgg$-submodule of $\Lambda^2\mn^*$  generated by $\omega$.
Since $\omega$ is closed, every element in $V$ is closed. 
Let 
\[
 V=V_1\oplus \dots \oplus V_r
\]
be its $\mgg$-module decomposition into irreducible submodules. 
If $\omega=\omega_1+ \dots + \omega_r$, with $\omega_i\in V_i$, we may assume that $P_t(\omega_{1})\ne0$
and thus 
\[
 P_t|_{V_{1}}:V_{1}\to \mm_t\wedge\mm_{k-t+1}
\]
is an injective $\mgg$-module morphism. Again, every element in $V_{1}$ is closed.  

If $\sigma\in V_{1}$ and $\sigma\ne0$, then $P_t(\sigma)\ne0$. 
Since (see Remark \ref{rmk.accurate}(\ref{item.desc}))
\[
\text{image}(d:\mn^*\to \Lambda^2\mn^*) \cap  \mm_t\wedge\mm_{k-t+1}=0,
\]
it follows that the cohomology class corresponding to $\sigma$ is nonzero.
Therefore, if $\pi:\ker(d:\Lambda^2 \mn^*\to \Lambda^3 \mn^*)\to H^2(\mn)$ is the canonical projection, then $\pi|_{V_1}$ is injective 
and $U=\pi(V_1)$ satisfies the required condition.

We now assume that $\tilde\mn=\R T \oplus\mn$ is symplectic and $\mn^1=\rmc^1(\mn)\ne0$.
Let $t=1,\dots,\lceil k/2\rceil$ be as in the hypothesis,
then it follows from Remark \ref{rmk.accurate}(\ref{item.accurate}) that the filtration  of $\tmn$ given by 
\[
 \tilde\mn^j=
 \begin{cases}
  \R T \oplus\mn^j,&\text{ if $j=1,\dots,t$;}\\
  \mn^{j},&\text{ if $j=t+1,\dots,k$} .
 \end{cases}
\] 
is accurate and it is also preserved by $\mgg$ viewed as a reductive Lie subalgebra of $\text{Der}(\tilde\mn)$ (acting by zero on $T$). 
If $T^*\in\tilde\mn^*$ is defined by $T^*(T)=1$ and $T^*|_{\mn}=0$.
then we may choose
\[
 \tilde\mm_j=
 \begin{cases}
  \R T^* \oplus\mm_t,&\text{ if $j=t$;}\\
  \mm_{j},&\text{ if $j\ne t$.} 
 \end{cases}
\]
Let  
\[
\tilde P_t:\Lambda^2 \tmn^*\to \tmm_t\wedge\tmm_{k-t+1}=T^*\wedge\mm_{k-t+1}\;\oplus\; \mm_t\wedge\mm_{k-t+1}.
\]
It follows from \prettyref{eq:condt} that $ \dim\tmn^{k-t} + \dim\tmn^{t-1}>\dim \tmn$
and hence  there is an irreducible $\mgg$-submodule $ U$ of $H^2(\tilde\mn)$ such that 
$\tilde P_t(\sigma)\ne0$ for any $\sigma\in\Lambda^2\tilde\mn^*$ corresponding to a nonzero cohomology 
class in $U$.
Since
\[
 H^2(\tilde\mn)=H^2(\mn)\;\oplus\; T^*\wedge H^1(\mn)\simeq H^2(\mn)\;\oplus\; T^*\wedge \mm_1
\]
as $\mgg$-modules and $\tilde P_t( T^*\wedge \mm_1)=0$, it follows that $U$ is a submodule of $H^2(\mn)$, 
and now $P_t(\sigma)=\tilde P_t(\sigma)\ne0$ 
for any $\sigma\in\Lambda^2\mn^*$ corresponding to a nonzero cohomology 
class in $U$.
\end{proof}

\begin{rem} Theorem 3.1. in \cite{dB3} is now a consequence of the theorem above. 
Indeed, the spectral sequence limit term $E_\infty^{0,2}$ in \cite{dB3} 
corresponds to the image of projection $P_1$ in Theorem \ref{thm.Criterio2}, when considering $\mgg$ as the 
trivial subalgebra of $\text{Der}(\mn)$ and the filtration given by the lower central series of $\mn$.
\end{rem} 

An immediate consequence of Theorem \ref{thm.Criterio2} and Remark \ref{rmk.accurate}(1) is the following corollary.

\begin{cor}\label{cor.seriecent}
Let $\mn$ be a nonabelian $k$-step nilpotent Lie algebra such that either $\mn$ or $\R\oplus\mn$ is 
symplectic and let $\mgg$ be a reductive subalgebra of $\Der(\mn)$ (a possible instance is $\mg=0$). 
Then, for any $t=1,\dots,\lceil k/2\rceil$ such that 
\begin{equation}\label{eq.seriecent} 
\dim \rmc^{k-t}(\mn) + \dim \rmc^{t-1}(\mn)>\dim \mn,
\end{equation}
(in particular for $t=1$) there is an irreducible 
$\mgg$-submodule $U$ of $H^2(\mn)$ such that $P_{t}(\sigma)\ne0$ for any $\sigma\in\Lambda^2\mn^*$ 
corresponding to a nonzero cohomology class in $U$.
\end{cor}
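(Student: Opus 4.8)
The plan is to deduce Corollary \ref{cor.seriecent} directly from Theorem \ref{thm.Criterio2} by specializing the accurate filtration to the lower central series. First I would set $\mn^j=\rmc^j(\mn)$ for $j=0,1,\dots,k$; since $\mn$ is $k$-step nilpotent, $\rmc^k(\mn)=0$ and $\rmc^{k-1}(\mn)\ne0$, so this is genuinely a filtration of length $k$ with $\mn^0=\mn$ and $\mn^k=0$, and it satisfies $[\rmc^i(\mn),\rmc^j(\mn)]\subset\rmc^{i+j+1}(\mn)$. Next I would invoke Remark \ref{rmk.accurate}(1), which records that the lower central series is an accurate filtration (this is exactly the content of \eqref{eq.exact_form} and \eqref{eq.closed_form}). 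Thus the hypotheses of Theorem \ref{thm.Criterio2} concerning accuracy are met for free.

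The remaining point is that the filtration by the lower central series is invariant under \emph{every} reductive subalgebra $\mgg\subset\Der(\mn)$. This is the key observation: each term $\rmc^j(\mn)$ is a characteristic ideal of $\mn$, hence stable under all of $\Der(\mn)$, and in particular preserved by $\mgg$. Moreover $\mgg$, being reductive, acts reductively on $\mn$. Therefore $\mgg$ qualifies as a reductive Lie subalgebra of $\Der(\mn)$ preserving the filtration, as required in the setup preceding Definition \ref{def.accurate}. With this, condition \eqref{eq:condt} of the theorem becomes precisely \eqref{eq.seriecent} upon substituting $\mn^j=\rmc^j(\mn)$.

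Finally I would address the hypotheses a) and b) of Theorem \ref{thm.Criterio2}. If $\mn$ is symplectic, we are in case a) and apply the theorem verbatim. If instead $\R\oplus\mn$ is symplectic, we are in case b), which additionally requires $\mn^1=\rmc^1(\mn)$; but with our choice $\mn^1=\rmc^1(\mn)$ holds by construction, so case b) applies as well. In either case Theorem \ref{thm.Criterio2} yields, for each $t$ satisfying \eqref{eq.seriecent}, an irreducible $\mgg$-submodule $U$ of $H^2(\mn)$ with $P_t(\sigma)\ne0$ for every $\sigma\in\Lambda^2\mn^*$ representing a nonzero class in $U$, which is exactly the assertion of the corollary. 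The claim that $t=1$ always satisfies \eqref{eq.seriecent} follows since $\rmc^{k-1}(\mn)\ne0$ forces $\dim\rmc^{k-1}(\mn)+\dim\rmc^0(\mn)\ge 1+\dim\mn>\dim\mn$.

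There is essentially no obstacle here; the corollary is a straightforward specialization, and the only thing worth double-checking is that the characteristic (hence $\mgg$-invariant) nature of the lower central series terms is what allows an \emph{arbitrary} reductive $\mgg$ to be inserted, including the trivial case $\mgg=0$ noted in the statement.
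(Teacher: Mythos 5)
Your proof is correct and follows exactly the paper's route: the paper derives Corollary \ref{cor.seriecent} as an immediate consequence of Theorem \ref{thm.Criterio2} and Remark \ref{rmk.accurate}(1), i.e., by specializing the theorem to the lower central series filtration. The details you fill in --- that the terms $\rmc^j(\mn)$ are characteristic ideals, hence invariant under any $\mgg\subset\Der(\mn)$, and that case b) of the theorem applies because $\mn^1=\rmc^1(\mn)$ holds by construction --- are precisely the observations the paper leaves implicit.
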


\section{Nilradicals of parabolic subalgebras with symplectic structures}\label{sec.4}
In this section we study the existence of symplectic structures on 
the nilradicals $\mn$ (or their trivial one-dimensional extensions) 
of parabolic subalgebras $\mpp\simeq \mgg_1\ltimes \mn$ of split real simple Lie algebras. 
We use Corollary \ref{cor.seriecent} applied to the 
reductive Lie algebra $\mgg_1$ viewed as a subalgebra of $\text{Der}(\mn)$.

\subsection{Basic facts about parabolic subalgebras}
Let $\mgg$ be a split real simple 
Lie algebra with triangular decomposition $\mgg=\mgg^{-}+\mh+\mgg^+$ associated to 
a positive root system $\Delta^+$ with simple roots $\Pi$ and let 
$\mb=\mh+\mgg^+$ be the corresponding Borel subalgebra of $\mgg$.
Let $W$ be the associated Weyl group of $\mgg$. 
Given $\alpha\in\Delta$, let $s_{\alpha}\in W$ denote the associated reflection and, for $w\in W$,
let $\ell(w)$ be the length of $w$ \cite{Hu}[\S10.3].
As usual, if $\gamma\in\Delta$, then $X_\gamma$ denotes an arbitrary root vector in the root space $\mgg_\gamma$.
Given $\gamma\in\Delta$ and $\alpha\in\Pi$ let $\coord_{\alpha}(\gamma)$ denote the $\alpha$-coordinate of $\gamma$ 
when it is expressed as a linear combination of simple roots. Let $\gamma_{\max}$ denote the unique maximal root of $\Delta^+$. 

The set of parabolic Lie subalgebras of $\mgg$ containing $\mb$ is parametrized by 
the subsets of the set of simple roots $\Pi$ as follows.
Given a subset $\Pi_0\subset\Pi$, the corresponding parabolic subalgebra of $\mgg$
is $\mpp\simeq \mgg_1\ltimes \mn$ where
$\mgg_1=\mgg_1^-\oplus \mh\oplus \mgg_1^+$ and  
\begin{eqnarray}
\Delta_1^+ &=& \{\gamma\in\Delta^+: \coord_{\alpha}(\gamma)=0\text{ for all }\alpha\in\Pi_0\},\nonumber\\
\Delta_{\mn}^+ &=& \{\gamma\in\Delta^+: \coord_{\alpha}(\gamma)\neq0\text{ for some }\alpha\in\Pi_0
\},\nonumber\\
\mgg_1^+&=& \mbox{ subspace of $\mgg^+$ spanned by the vectors }X_\gamma \mbox{  with } \gamma\in \Delta_1^+,\nonumber\\
\mgg_1^-&=& \mbox{ subspace of $\mgg^-$ spanned by the vectors }X_{-\gamma} \mbox{  with } \gamma\in \Delta_1^+,\nonumber\\
\mn&=& \mbox{ subspace of $\mgg^+$ spanned by the vectors }X_\gamma \mbox{  with } \gamma\in \Delta_{\mn}^+.\nonumber
\end{eqnarray}
 
On the one hand, $\mgg_1$ is reductive and it can be viewed as
a Lie subalgebra of $\text{Der}(\mn)$ via $\ad_{\mgg}$. 
Let $\mb_1=\mh+\mgg_1^+$ be the Borel subalgebra of $\mgg_1$ associated to $\mb$.

On the other hand, $\mn$ is nilpotent and its lower central series (which coincides, after transposing the 
indexes, with the upper central series) can be described as follows \cite[Thm 2.12]{Ko1}. Given $\gamma\in\Delta$, let 
\[
 o(\gamma)=\sum_{\alpha\in\Pi_0} \coord_{\alpha}(\gamma)
\]
(in particular $\gamma\in\Delta_{\mn}^+$ if and only if $ o(\gamma)>0$) and, for $i\in\Z$, let 
\begin{equation}\label{eq.g_{()}}
 \mg_{(i)}=\bigoplus_{\begin{smallmatrix}
               \gamma\in\Delta \\
               o(\gamma)=i
              \end{smallmatrix}} \mg_{\gamma}.
\end{equation}
If  
$\mn=\rmc^0(\mn)\supset\rmc^1(\mn)\supset\dots\supset\rmc^{k-1}(\mn)\supset\rmc^{k}(\mn)=0$ is
the lower central series of $\mn$, then
\begin{equation}\label{eq.lcs}
 \rmc^j(\mn)=\bigoplus_{i=j+1}^{k}\mg_{(i)},\;\; k=o(\gamma_{\text{max}})
 \text{ and $\rmc^{k-1}(\mn)=\mg_{(k)}$ is the center of $\mn$}.
\end{equation}
\begin{rem}\label{rem.abelian} It follows from this description of the lower central series that 
the nilradical $\mn$ is abelian if and only if $\Pi_0=\{\alpha\}$ and $\coord_{\alpha}(\gamma_{\max})=1$.
\end{rem}

If $(\cdot,\cdot)$ is the Killing form of $\mgg$, then $\mn^*$ is identified with 
\begin{align*}
  \mn^-
& = \mbox{ subspace of $\mgg^-$ spanned by the vectors }X_{-\gamma} \mbox{  with } \gamma\in \Delta_{\mn}^+ \\
& =\bigoplus_{i=1}^{k}\mg_{(-i)}.
\end{align*}
Under this identification the dual action of $\mgg_1$ on $\mn^*$ becomes the adjoint action of $\mgg_1$ on $\mn^-$.
Also, if $j=1,\dots,k$ and 
$\mm_j$ is defined as in \eqref{eq.m_j}, then 
\begin{equation}\label{eq.mm_j=mg_(-j)}
\mm_j\simeq\mg_{(-j)}.
\end{equation}
In particular, if $P=P_1$ is the projection considered in \eqref{eq.definition_P},
then $P$ becomes
\begin{equation}\label{eq.definition_PP}
 P:\Lambda^2 \mn^-\to \mg_{(-1)}\wedge\mg_{(-k)}.
 \end{equation}

\subsection{The 2-cohomology of the nilradicals of parabolic subalgebras}

A well known result of Kostant \cite{KO} describes the irreducible $\mgg_1$-submodules of $H^2(\mn)$.
In fact, up to scalars, the set of representatives of the $\mb_1$-highest weight vectors in $H^2(\mn)$ is:
\begin{equation}
H^2(\mn)_{\text{hwv}}=
\{X_{-\gamma_1}\wedge X_{-\gamma_{2}}\in\Lambda^2\mn^-:\{\gamma_1,\gamma_2\}=w\Delta^-\cap\Delta^+,\,w\in W^{1,2}\}
\end{equation}
where 
\[
 W^{1,2}=\{w\in W : w\Delta^-\cap\Delta^+\subset\Delta^+_{\mn}\text{ and }\ell(w)=2\}.
\]
The following theorem describes $H^2(\mn)_{\textrm{\rm hwv}}$ more precisely.
\begin{teo}\label{prop.H2_hwv} A set of representatives of the $\mb_1$-highest weight vectors in $H^2(\mn)$ is
\begin{align*}
H^2(\mn)_{\rm hwv}
&=\{ X_{-\alpha}\wedge X_{-s_{\alpha}(\beta)} : 
\alpha\in\Pi_0,\;\beta\in\Pi \mbox{ and }s_{\alpha}(\beta)\in \Delta_{\mn}^+\} \\
&=\{ X_{-\alpha}\wedge X_{-\beta} :\alpha,\beta\in\Pi_0,\,(\alpha,\beta)=0\} 
\,\cup \\
&\hspace{2cm}\{ X_{-\alpha}\wedge X_{-s_{\alpha}(\beta)} : \alpha\in\Pi_0,\;\beta\in\Pi,\;(\beta,\alpha)<0\}. 
 \end{align*}
\end{teo}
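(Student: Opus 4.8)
The plan is to take Kostant's description of $H^2(\mn)_{\rm hwv}$ recorded above as the starting point and to make the combinatorial index set $W^{1,2}$ completely explicit. The only external inputs I need are the standard facts that for $w\in W$ one has $|w\Delta^-\cap\Delta^+|=\ell(w)$ (so each $w\in W^{1,2}$ contributes a well-defined pair $\{\gamma_1,\gamma_2\}=w\Delta^-\cap\Delta^+$) and the usual description of the roots made negative by a reduced word \cite{Hu}. First I would classify the length-two elements: every $w$ with $\ell(w)=2$ admits a reduced expression $w=s_\alpha s_\beta$ with $\alpha,\beta\in\Pi$ distinct, and conversely $\ell(s_\alpha s_\beta)=2$ whenever $\alpha\ne\beta$ (the length is even and nonzero, hence $2$).

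The key computation is to identify $w\Delta^-\cap\Delta^+$ for $w=s_\alpha s_\beta$. Writing $w\Delta^-\cap\Delta^+=\{\gamma\in\Delta^+:w^{-1}\gamma\in\Delta^-\}$ and applying the inversion-set description to the reduced word $w^{-1}=s_\beta s_\alpha$ (or checking directly that $s_\beta s_\alpha$ sends exactly $\alpha$ and $s_\alpha(\beta)$ to negative roots, and nothing else positive), I obtain
\[
w\Delta^-\cap\Delta^+=\{\alpha,\;s_\alpha(\beta)\}.
\]
Here $s_\alpha(\beta)=\beta-\langle\beta,\alpha^\vee\rangle\alpha$ is automatically a positive root when $\alpha\ne\beta$, since $\langle\beta,\alpha^\vee\rangle\le0$. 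Hence the condition $w\Delta^-\cap\Delta^+\subset\Delta_{\mn}^+$ defining $W^{1,2}$ reads $\{\alpha,s_\alpha(\beta)\}\subset\Delta_{\mn}^+$. Because $\alpha$ is a simple root, $o(\alpha)=1$ if $\alpha\in\Pi_0$ and $o(\alpha)=0$ otherwise, so the first containment $\alpha\in\Delta_{\mn}^+$ is equivalent to $\alpha\in\Pi_0$, while the second is simply $s_\alpha(\beta)\in\Delta_{\mn}^+$. Substituting this back into Kostant's list yields the first displayed equality; one should note that the data $\{\gamma_1,\gamma_2\}$ is unordered, so the two reduced expressions $s_\alpha s_\beta=s_\beta s_\alpha$ arising when $(\alpha,\beta)=0$ produce the same highest weight vector and cause no double counting.

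Finally I would split the pairs $(\alpha,\beta)$ with $\alpha\in\Pi_0$, $\beta\in\Pi$ and $s_\alpha(\beta)\in\Delta_{\mn}^+$ according to the sign of $(\alpha,\beta)$; since $\alpha\ne\beta$ are simple, this sign is either $0$ or negative (the case $\beta=\alpha$ is excluded because $s_\alpha(\alpha)=-\alpha\notin\Delta_{\mn}^+$). If $(\alpha,\beta)=0$ then $s_\alpha(\beta)=\beta$, so the condition $s_\alpha(\beta)\in\Delta_{\mn}^+$ becomes $\beta\in\Pi_0$, giving the set $\{X_{-\alpha}\wedge X_{-\beta}:\alpha,\beta\in\Pi_0,\ (\alpha,\beta)=0\}$. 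If $(\alpha,\beta)<0$ then $\coord_{\alpha}(s_\alpha(\beta))=-\langle\beta,\alpha^\vee\rangle>0$ together with $\alpha\in\Pi_0$ forces $o(s_\alpha(\beta))>0$, so $s_\alpha(\beta)\in\Delta_{\mn}^+$ holds automatically and no further restriction on $\beta$ remains; this produces the set $\{X_{-\alpha}\wedge X_{-s_\alpha(\beta)}:\alpha\in\Pi_0,\ \beta\in\Pi,\ (\beta,\alpha)<0\}$. The union of these two cases is precisely the asserted description.

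I expect the only real friction to be bookkeeping rather than a genuine obstacle: pinning down the correspondence $w=s_\alpha s_\beta\leftrightarrow\{\alpha,s_\alpha(\beta)\}$ with the correct roles of $\alpha$ and $\beta$ (it is easy to transpose them when passing between $w$ and $w^{-1}$), and checking that the two $\Delta_{\mn}^+$-membership conditions collapse to $\alpha\in\Pi_0$ together with a condition on $\beta$ that is automatic precisely in the $(\alpha,\beta)<0$ case and amounts to $\beta\in\Pi_0$ in the orthogonal case.
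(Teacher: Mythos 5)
Your proposal is correct and follows essentially the same route as the paper: starting from Kostant's description, writing each length-two element as $w=s_\alpha s_\beta$, computing $w\Delta^-\cap\Delta^+=\{\alpha,s_\alpha(\beta)\}$, and translating the containment in $\Delta_{\mn}^+$ into the conditions $\alpha\in\Pi_0$ plus ($\beta\in\Pi_0$ when $(\alpha,\beta)=0$, automatic when $(\beta,\alpha)<0$). The only difference is one of detail: where the paper invokes ``a direct computation,'' you justify the inversion-set identity via the reduced word for $w^{-1}$ and also record the no-double-counting and $\ell(s_\alpha s_\beta)=2$ points explicitly, which the paper leaves implicit.
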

\begin{proof}
If $w\in W$ and $\ell(w)=2$, then $w=s_\alpha s_\beta$ for some $\alpha,\beta\in\Pi$, $\alpha\ne\beta$.
Taking into account that $w\Delta^-\cap\Delta^+$ contains exactly two elements \cite{Hu}[\S10.3],
  a direct computation shows that
$ w\Delta^-\cap\Delta^+=\{ \alpha,s_\alpha(\beta) \}$.

Now $w\in W^{1,2}$ if and only if $w\Delta^-\cap\Delta^+=\{ \alpha,s_\alpha(\beta) \}\subset\Delta_{\mn}^+$. 
Since $\alpha$ and $\beta$ are simple and 
 \begin{equation}\label{eq:sigma}
 s_{\alpha}(\beta)=\beta-2\frac{(\beta,\alpha)}{||\alpha||^2} \alpha ,
 \end{equation}
it follows that $w\in W^{1,2}$ if and only if $\alpha\in\Pi_0$ and, in the case $(\beta,\alpha)=0$, $\beta\in\Pi_0$.
 \end{proof}

\subsection{Symplectic nilradicals}

\begin{pro}\label{cor.centro}
Let $\mn$ be a nonabelian nilradical of the parabolic subalgebra of $\mgg$ associated to $\Pi_0$.
If either $\mn$ or $\R\oplus\mn$ is symplectic, 
then there exist $\alpha\in\Pi_0$ and $\beta\in\Pi$, 
such that  $\mgg_{s_\alpha(\beta)}\subset \rmc^{k-1}(\mn)$  and  $(\beta,\alpha)<0$.
\end{pro}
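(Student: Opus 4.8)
The plan is to apply Corollary~\ref{cor.seriecent} with $\mgg=\mgg_1$ and $t=1$, and then read the conclusion off Kostant's list in Theorem~\ref{prop.H2_hwv}. First I would recall that $\mgg_1$ is a reductive subalgebra of $\Der(\mn)$ and that the filtration at hand is the lower central series, which is accurate by Remark~\ref{rmk.accurate}(1); since $t=1$ automatically satisfies \eqref{eq.seriecent}, the corollary yields an irreducible $\mgg_1$-submodule $U\subset H^2(\mn)$ such that the projection $P=P_1$ is nonzero on every nonzero cohomology class of $U$. As $P$ is a $\mgg_1$-module morphism and $U$ is irreducible, this says precisely that $P|_U$ is injective.

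Next I would use that an irreducible module has a $\mb_1$-highest weight vector, so up to a scalar $U$ is generated by some $v\in H^2(\mn)_{\rm hwv}$. By Theorem~\ref{prop.H2_hwv} I may write $v=X_{-\alpha}\wedge X_{-s_\alpha(\beta)}$ with $\alpha\in\Pi_0$, $\beta\in\Pi$ and $s_\alpha(\beta)\in\Delta_{\mn}^+$. The key point is that $v$ is homogeneous for the $o(\cdot)$-grading: since $\alpha\in\Pi_0$ is simple we have $o(\alpha)=1$, so under the identifications $\mn^*\simeq\mn^-$ and $\mm_j\simeq\mg_{(-j)}$ of \eqref{eq.mm_j=mg_(-j)} the factor $X_{-\alpha}$ lies in $\mg_{(-1)}=\mm_1$, and hence $v$ sits in the single summand $\mm_1\wedge\mm_{o(s_\alpha(\beta))}$ of the decomposition $\Lambda^2\mn^-=\bigoplus_{1\le i\le j\le k}\mg_{(-i)}\wedge\mg_{(-j)}$.

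Now I would invoke injectivity: the class of $v$ is nonzero in $H^2(\mn)$, so $P(v)\ne0$. By \eqref{eq.definition_PP} the map $P$ is the projection onto $\mg_{(-1)}\wedge\mg_{(-k)}$, and $v$ lies entirely in the summand of bidegree $(1,o(s_\alpha(\beta)))$; therefore $P(v)\ne0$ forces $o(s_\alpha(\beta))=k$, that is, $\mgg_{s_\alpha(\beta)}\subset\mg_{(k)}=\rmc^{k-1}(\mn)$ by \eqref{eq.lcs}. To obtain the strict inequality $(\beta,\alpha)<0$ I would rule out the first family in Theorem~\ref{prop.H2_hwv}: if $(\beta,\alpha)=0$ then $\beta\in\Pi_0$ and $s_\alpha(\beta)=\beta$, so $o(s_\alpha(\beta))=o(\beta)=1$, contradicting $o(s_\alpha(\beta))=k\ge2$ (here $k\ge2$ because $\mn$ is nonabelian). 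Hence $v$ belongs to the second family and $(\beta,\alpha)<0$, which is exactly the assertion.

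The step I expect to need the most care is the bidegree bookkeeping in the third paragraph: one must check that the specific closed representative $X_{-\alpha}\wedge X_{-s_\alpha(\beta)}$ is genuinely homogeneous of bidegree $(1,o(s_\alpha(\beta)))$, so that $P$ either fixes it or annihilates it and the vanishing/nonvanishing is clean. Everything else is a direct translation between the grading $o(\cdot)$, the identification of the center $\rmc^{k-1}(\mn)=\mg_{(k)}$, and Kostant's two-case description of the highest weight vectors.
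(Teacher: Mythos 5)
Your proposal is correct and follows essentially the same route as the paper's own proof: apply the case $t=1$ of Corollary~\ref{cor.seriecent} with $\mgg=\mgg_1$, identify the highest weight vector of the resulting irreducible submodule with one of Kostant's representatives $X_{-\alpha}\wedge X_{-s_\alpha(\beta)}$ from Theorem~\ref{prop.H2_hwv}, use homogeneity ($X_{-\alpha}\in\mg_{(-1)}$) to conclude that $P\ne0$ forces $X_{-s_\alpha(\beta)}\in\mg_{(-k)}$, i.e.\ $\mgg_{s_\alpha(\beta)}\subset\rmc^{k-1}(\mn)$, and finally rule out $(\beta,\alpha)=0$ since $\mn$ nonabelian gives $k\ge2$ while $s_\alpha(\beta)=\beta$ would have $o$-degree $1$. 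Your bidegree bookkeeping is exactly the paper's implicit argument, just spelled out more explicitly.
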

\begin{proof}
If either $\mn$ or $\R\oplus\mn$ is symplectic, 
we know from case $t=1$ in Corollary \ref{cor.seriecent} that there exists 
a nonzero  $\mb_1$-highest weight vector $u\in H^2(\mn)_{\text{hwv}}$
such that $P(\sigma)\ne0$ for any representative $\sigma$ of $u$.  
Let $\alpha\in\Pi_0$ and $\beta\in\Pi$ be as in Theorem \ref{prop.H2_hwv}
the roots corresponding to $u$.
Since $X_{-\alpha}\in\mg_{(-1)}$, it follows from \eqref{eq.definition_PP} that 
\[
P(X_{-\alpha}\wedge X_{-s_{\alpha}(\beta)})\ne 0
\]
if and only if $X_{-s_{\alpha}(\beta)}\in\mg_{(-k)}$, or 
if and only if  $\mgg_{s_\alpha(\beta)}\subset \rmc^{k-1}(\mn)$.
Since $\mn$ is nonabelian, it follows that $k>1$ and hence $s_{\alpha}(\beta)\ne\beta$.  
This shows that $(\beta,\alpha)\ne0$ and being both simple roots, we obtain $(\beta,\alpha)<0$.
\end{proof}

\begin{pro}\label{pro:pro3}
Given $\gamma\in\Delta_\mn^+$, the root space $\mgg_\gamma$ is contained in $\rmc^{k-1}(\mn)$
if and only if $\coord_{\alpha}(\gamma)=\coord_{\alpha}(\gamma_{\max})$ for all $\alpha\in\Pi_0$.
\end{pro}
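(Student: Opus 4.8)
The plan is to translate the containment $\mgg_\gamma\subset\rmc^{k-1}(\mn)$ into a numerical condition on $o(\gamma)$ using the description of the lower central series, and then to exploit the fact that $\gamma_{\max}$ dominates every root in the usual partial order on the root lattice.

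First I would record that, by \eqref{eq.lcs} and \eqref{eq.g_{()}}, one has $\rmc^{k-1}(\mn)=\mg_{(k)}=\bigoplus_{o(\gamma)=k}\mg_{\gamma}$ with $k=o(\gamma_{\max})$. Since $\gamma$ is a single root, $\mgg_\gamma$ lies entirely in the graded piece $\mg_{(o(\gamma))}$, so $\mgg_\gamma\subset\rmc^{k-1}(\mn)$ if and only if $o(\gamma)=o(\gamma_{\max})$. It therefore suffices to prove that $o(\gamma)=o(\gamma_{\max})$ if and only if $\coord_{\alpha}(\gamma)=\coord_{\alpha}(\gamma_{\max})$ for every $\alpha\in\Pi_0$.

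The ``if'' direction is immediate: summing the equalities $\coord_{\alpha}(\gamma)=\coord_{\alpha}(\gamma_{\max})$ over $\alpha\in\Pi_0$ yields $o(\gamma)=o(\gamma_{\max})$ by definition of $o$. For the ``only if'' direction I would use that $\gamma_{\max}$ is the maximal element of $\Delta$ in the root partial order, so $\gamma_{\max}-\gamma$ is a non-negative integer combination of simple roots; in particular $c_{\alpha}:=\coord_{\alpha}(\gamma_{\max})-\coord_{\alpha}(\gamma)\ge 0$ for every $\alpha\in\Pi$, hence for every $\alpha\in\Pi_0$. Then $o(\gamma_{\max})-o(\gamma)=\sum_{\alpha\in\Pi_0}c_{\alpha}$ is a sum of non-negative integers, so the hypothesis $o(\gamma)=o(\gamma_{\max})$ forces each $c_{\alpha}=0$, that is, $\coord_{\alpha}(\gamma)=\coord_{\alpha}(\gamma_{\max})$ for all $\alpha\in\Pi_0$.

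There is essentially no serious obstacle here; the only point requiring care is the monotonicity input, namely that the coordinates of $\gamma_{\max}$ dominate those of $\gamma$ coordinatewise, which is the standard fact that the highest root is maximal in the root ordering. Everything else reduces to summing non-negative integers.
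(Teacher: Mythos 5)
Your proof is correct and takes essentially the same approach as the paper: both reduce the containment $\mgg_\gamma\subset\rmc^{k-1}(\mn)$ to the equality $o(\gamma)=o(\gamma_{\max})$ via the graded description \eqref{eq.g_{()}}--\eqref{eq.lcs}, and both settle the converse using that the coordinates of $\gamma_{\max}$ dominate those of every root, so equal $\Pi_0$-sums force equal $\Pi_0$-coordinates. The only cosmetic difference is that the paper argues by contradiction (a strictly smaller coordinate in $\Pi_0$ would force a strictly larger one elsewhere in $\Pi_0$), whereas you sum the non-negative differences directly.
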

\begin{proof} If $\coord_{\alpha}(\gamma)=\coord_{\alpha}(\gamma_{\max})$ 
for all $\alpha\in\Pi_0$ then it is clear  from \eqref{eq.lcs} that  
$\mgg_\gamma$ is contained in the center of $\mn$.

Conversely, let $\gamma\in\Delta_\mn^+$ be such that  $\mgg_\gamma\subset\rmc^{k-1}(\mn)$ 
and assume that $\coord_{\alpha}(\gamma)<\coord_{\alpha}(\gamma_{\max})$ 
for some $\alpha\in\Pi_0$.
Since $o(\gamma_{\max})=o(\gamma)=k-1$,  
there exists some $\beta\in\Pi_0$ such that $\coord_{\beta}(\gamma)>\coord_{\beta}(\gamma_{\max})$ which is clearly  not possible. 
%
\end{proof}

\begin{pro}\label{pro:pro4}
Let $\mn$ be a nonabelian nilradical of the parabolic subalgebra of $\mgg$ associated to $\Pi_0$.
If either 
$\mn$ or $\R\oplus\mn$ is symplectic, then one of the following statements holds
\begin{enumerate}
\item $\Pi_0=\{\alpha\}$ and there exists $\beta\in\Pi$ such that 
$\coord_{\alpha}(\gamma_{\max})=-2\frac{(\beta,\alpha)}{||\alpha||^2}$.
\item $\Pi_0=\{\alpha,\beta\}$, 
$\coord_{\beta}(\gamma_{\max})=1$  and  
$\coord_{\alpha}(\gamma_{\max})=-2\frac{(\beta,\alpha)}{||\alpha||^2}$.
\end{enumerate}  
In any case, $(\beta,\alpha)<0$.
\end{pro}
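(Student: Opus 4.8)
The plan is to feed the two roots produced by Proposition \ref{cor.centro} into the coordinate criterion of Proposition \ref{pro:pro3} and read off the possibilities. First I would invoke Proposition \ref{cor.centro} to obtain $\alpha\in\Pi_0$ and $\beta\in\Pi$ with $(\beta,\alpha)<0$ and $\mgg_{s_\alpha(\beta)}\subset\rmc^{k-1}(\mn)$; this already secures the final assertion that $(\beta,\alpha)<0$. Since $(\beta,\alpha)<0$ forces $\beta\ne\alpha$ (otherwise $(\beta,\alpha)=||\alpha||^2>0$), I would set $c=-2\frac{(\beta,\alpha)}{||\alpha||^2}$, a positive integer, and use \eqref{eq:sigma} to write $s_\alpha(\beta)=\beta+c\,\alpha$. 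Expressed in the basis of simple roots this gives $\coord_{\alpha}(s_\alpha(\beta))=c$, $\coord_{\beta}(s_\alpha(\beta))=1$, and $\coord_{\alpha'}(s_\alpha(\beta))=0$ for every simple root $\alpha'\ne\alpha,\beta$.

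Next I would apply Proposition \ref{pro:pro3} to $\gamma=s_\alpha(\beta)$: the containment $\mgg_{s_\alpha(\beta)}\subset\rmc^{k-1}(\mn)$ yields $\coord_{\alpha'}(s_\alpha(\beta))=\coord_{\alpha'}(\gamma_{\max})$ for all $\alpha'\in\Pi_0$. The key external fact I would use here is that, for a simple Lie algebra, every simple root occurs with a strictly positive coefficient in $\gamma_{\max}$, so $\coord_{\alpha'}(\gamma_{\max})>0$ for all $\alpha'\in\Pi$. Combining this with the coordinate computation above rules out any simple root in $\Pi_0$ other than $\alpha$ and $\beta$: if $\alpha'\in\Pi_0$ with $\alpha'\ne\alpha,\beta$, then $0=\coord_{\alpha'}(s_\alpha(\beta))=\coord_{\alpha'}(\gamma_{\max})>0$, a contradiction. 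Hence $\Pi_0\subset\{\alpha,\beta\}$, and (since $\alpha\in\Pi_0$) the argument splits into exactly two cases.

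Finally I would distinguish whether $\beta\in\Pi_0$. If $\beta\notin\Pi_0$ then $\Pi_0=\{\alpha\}$, and matching the $\alpha$-coordinates gives $\coord_{\alpha}(\gamma_{\max})=c=-2\frac{(\beta,\alpha)}{||\alpha||^2}$, which is case (1). If $\beta\in\Pi_0$ then $\Pi_0=\{\alpha,\beta\}$, and matching the $\alpha$- and $\beta$-coordinates gives $\coord_{\alpha}(\gamma_{\max})=c=-2\frac{(\beta,\alpha)}{||\alpha||^2}$ together with $\coord_{\beta}(\gamma_{\max})=1$, which is case (2).

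The computation itself is short, as most of the work is already packaged into Propositions \ref{cor.centro} and \ref{pro:pro3}. The only point requiring care, and the main conceptual input beyond those propositions, is the positivity of all simple-root coordinates of $\gamma_{\max}$: this is precisely what forces $\Pi_0$ into $\{\alpha,\beta\}$ and thereby collapses the analysis to exactly the two listed configurations.
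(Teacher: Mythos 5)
Your proof is correct and follows essentially the same route as the paper's: both feed the pair $(\alpha,\beta)$ produced by Proposition \ref{cor.centro} into Proposition \ref{pro:pro3}, expand $s_\alpha(\beta)$ via \eqref{eq:sigma} as a combination of at most the two simple roots $\alpha,\beta$, and use the positivity of every simple-root coordinate of $\gamma_{\max}$ to force $\Pi_0\subset\{\alpha,\beta\}$ before matching coordinates in the two resulting cases. The only cosmetic difference is that you take $(\beta,\alpha)<0$ directly from the statement of Proposition \ref{cor.centro}, whereas the paper re-derives it at the end from $\coord_{\alpha}(\gamma_{\max})\ge 1$.
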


\begin{proof} 
If either 
$\mn$ or $\R\oplus\mn$ is symplectic, then it follows 
from Propositions \ref{cor.centro} and \ref{pro:pro3}
that there exist $\alpha\in\Pi_0$ and $\beta\in\Pi$, 
such that 
\[
\coord_{\alpha'}(s_\alpha(\beta))=\coord_{\alpha'}(\gamma_{\max}) 
\]
for all 
$\alpha'\in\Pi_0$.
Since 
 \begin{equation*}
 s_{\alpha}(\beta)=\beta-2\frac{(\beta,\alpha)}{||\alpha||^2}\,\alpha,
 \end{equation*}
 and $\alpha$ and $\beta$ are simple,  $s_{\alpha}(\beta)$ is a linear combination of at most two simple roots.
 Since $\coord_{\alpha'}(\gamma_{\max})\ge1$ for any simple root $\alpha'$ it follows that 
 $\Pi_0$ contains, in addition to $\alpha$, at most $\beta$.
 
 If $\Pi_0=\{\alpha\}$ then we must have 
$\coord_{\alpha}(\gamma_{\max})=-2\frac{(\beta,\alpha)}{||\alpha||^2}$. If instead  $\Pi_0=\{\alpha,\beta\}$, then we must have 
$\coord_{\beta}(\gamma_{\max})=1$  and 
$\coord_{\alpha}(\gamma_{\max})=-2\frac{(\beta,\alpha)}{||\alpha||^2}$. 
Again, since $\coord_{\alpha}(\gamma_{\max})\ge 1$, it follows that $(\beta,\alpha)<0$.
\end{proof}

\smallskip

The following table shows the Dynkin diagram of each simple Lie algebra and the labels indicate
the coordinate of the maximal root in the corresponding simple root. 

\setlength{\unitlength}{0.7cm}
\begin{picture}(6,1.2)(-3,-.3)
\put(-3,-0.15){$A_n,\,n\ge1:$}
\put(0,0){\circle{.2}}
\put(0.3,0){\line(1,0){1}}
\put(1.6,0){\circle{.2}}
\put(1.9,0){\line(1,0){1}}
\put(3.2,0){$\dots$}
\put(4.0,0){\line(1,0){1}}
\put(5.3,0){\circle{.2}}
\put(5.6,0){\line(1,0){1}}
\put(6.9,0){\circle{.2}}
\put(7.2,0){\line(1,0){1}}
\put(8.5,0){\circle{.2}}
\put(-0.1,-.4){\scriptsize 1}
\put(1.5,-.4){\scriptsize 1}
\put(5.2,-.4){\scriptsize 1}
\put(6.8,-.4){\scriptsize 1}
\put(8.4,-.4){\scriptsize 1}
\end{picture}

\setlength{\unitlength}{0.7cm}
\begin{picture}(6,1.2)(-3,-.3)
\put(-3,-0.15){$B_n,\,n\ge2:$}
\put(0,0){\circle{.2}}
\put(0.3,0){\line(1,0){1}}
\put(1.6,0){\circle{.2}}
\put(1.9,0){\line(1,0){1}}
\put(3.2,0){$\dots$}
\put(4.0,0){\line(1,0){1}}
\put(5.3,0){\circle{.2}}
\put(5.6,0){\line(1,0){1}}
\put(6.9,0){\circle{.2}}
\put(7.2,0.05){\line(1,0){1}}
\put(7.2,-0.05){\line(1,0){1}}
\put(7.6,-0.13){$>$}
\put(8.5,0){\circle{.2}}
\put(-0.1,-.4){\scriptsize 1}
\put(1.5,-.4){\scriptsize 2}
\put(5.2,-.4){\scriptsize 2}
\put(6.8,-.4){\scriptsize 2}
\put(8.4,-.4){\scriptsize 2}
\end{picture}

\setlength{\unitlength}{0.7cm}
\begin{picture}(6,1.2)(-3,-.3)
\put(-3,-0.15){$C_n,\,n\ge3:$}
\put(0,0){\circle{.2}}
\put(0.3,0){\line(1,0){1}}
\put(1.6,0){\circle{.2}}
\put(1.9,0){\line(1,0){1}}
\put(3.2,0){$\dots$}
\put(4.0,0){\line(1,0){1}}
\put(5.3,0){\circle{.2}}
\put(5.6,0){\line(1,0){1}}
\put(6.9,0){\circle{.2}}
\put(7.2,0.05){\line(1,0){1}}
\put(7.2,-0.05){\line(1,0){1}}
\put(7.6,-0.13){$<$}
\put(8.5,0){\circle{.2}}
\put(-0.1,-.4){\scriptsize 2}
\put(1.5,-.4){\scriptsize 2}
\put(5.2,-.4){\scriptsize 2}
\put(6.8,-.4){\scriptsize 2}
\put(8.4,-.4){\scriptsize 1}
\end{picture}

\setlength{\unitlength}{0.7cm}
\begin{picture}(6,2.5)(-3,-.3)
\put(-3,-0.15){$D_n,\,n\ge4:$}
\put(0,0){\circle{.2}}
\put(0.3,0){\line(1,0){1}}
\put(1.6,0){\circle{.2}}
\put(1.9,0){\line(1,0){1}}
\put(3.2,0){$\dots$}
\put(4.0,0){\line(1,0){1}}
\put(5.3,0){\circle{.2}}
\put(5.6,0){\line(1,0){1}}
\put(6.9,0){\circle{.2}}
\put(7.2,0){\line(1,0){1}}
\put(8.5,0){\circle{.2}}
\put(6.9,0.2){\line(0,1){1}}
\put(6.9,1.5){\circle{.2}}
\put(-0.1,-.4){\scriptsize 1}
\put(1.5,-.4){\scriptsize 2}
\put(5.2,-.4){\scriptsize 2}
\put(6.8,-.4){\scriptsize 2}
\put(6.5,1.4){\scriptsize 1}
\put(8.4,-.4){\scriptsize 1}
\end{picture}

\setlength{\unitlength}{0.7cm}
\begin{picture}(6,2.5)(-3,-.3)
\put(-3,-0.15){$E_6:$}
\put(0,0){\circle{.2}}
\put(0.3,0){\line(1,0){1}}
\put(1.6,0){\circle{.2}}
\put(1.9,0){\line(1,0){1}}
\put(3.3,0){\circle{.2}}
\put(3.6,0){\line(1,0){1}}
\put(4.9,0){\circle{.2}}
\put(5.2,0){\line(1,0){1}}
\put(6.5,0){\circle{.2}}
\put(3.3,0.2){\line(0,1){1}}
\put(3.3,1.5){\circle{.2}}
\put(-0.1,-.4){\scriptsize 1}
\put(1.5,-.4){\scriptsize 2}
\put(3.2,-.4){\scriptsize 3}
\put(4.8,-.4){\scriptsize 2}
\put(2.9,1.4){\scriptsize 2}
\put(6.4,-.4){\scriptsize 1}
\end{picture}

\setlength{\unitlength}{0.7cm}
\begin{picture}(6,2.5)(-3,-.3)
\put(-3,-0.15){$E_7:$}
\put(0,0){\circle{.2}}
\put(0.3,0){\line(1,0){1}}
\put(1.6,0){\circle{.2}}
\put(1.9,0){\line(1,0){1}}
\put(3.3,0){\circle{.2}}
\put(3.6,0){\line(1,0){1}}
\put(4.9,0){\circle{.2}}
\put(3.3,0.2){\line(0,1){1}}
\put(3.3,1.5){\circle{.2}}
\put(5.2,0){\line(1,0){1}}
\put(6.5,0){\circle{.2}}
\put(6.8,0){\line(1,0){1}}
\put(8.1,0){\circle{.2}}
\put(-0.1,-.4){\scriptsize 2}
\put(2.9,1.4){\scriptsize 2}
\put(1.5,-.4){\scriptsize 3}
\put(3.2,-.4){\scriptsize 4}
\put(4.8,-.4){\scriptsize 3}
\put(6.4,-.4){\scriptsize 2}
\put(8.0,-.4){\scriptsize 1}
\end{picture}

\setlength{\unitlength}{0.7cm}
\begin{picture}(6,2.5)(-3,-.3)
\put(-3,-0.15){$E_8:$}
\put(0,0){\circle{.2}}
\put(0.3,0){\line(1,0){1}}
\put(1.6,0){\circle{.2}}
\put(1.9,0){\line(1,0){1}}
\put(3.3,0){\circle{.2}}
\put(3.6,0){\line(1,0){1}}
\put(4.9,0){\circle{.2}}
\put(3.3,0.2){\line(0,1){1}}
\put(3.3,1.5){\circle{.2}}
\put(5.2,0){\line(1,0){1}}
\put(6.5,0){\circle{.2}}
\put(6.8,0){\line(1,0){1}}
\put(8.1,0){\circle{.2}}
\put(8.4,0){\line(1,0){1}}
\put(9.7,0){\circle{.2}}
\put(-0.1,-.4){\scriptsize 2}
\put(2.9,1.4){\scriptsize 3}
\put(1.5,-.4){\scriptsize 4}
\put(3.2,-.4){\scriptsize 6}
\put(4.8,-.4){\scriptsize 5}
\put(6.4,-.4){\scriptsize 4}
\put(8.0,-.4){\scriptsize 3}
\put(9.6,-.4){\scriptsize 2}
\end{picture}

\setlength{\unitlength}{0.7cm}
\begin{picture}(6,1.2)(-3,-.3)
\put(-3,-0.15){$F_4:$}
\put(0,0){\circle{.2}}
\put(0.3,0){\line(1,0){1}}
\put(1.6,0){\circle{.2}}
\put(2.0,0.05){\line(1,0){1}}
\put(2.0,-0.05){\line(1,0){1}}
\put(2.4,-0.13){$<$}
\put(3.3,0){\circle{.2}}
\put(3.6,0){\line(1,0){1}}
\put(4.9,0){\circle{.2}}
\put(-0.1,-.4){\scriptsize 2}
\put(1.5,-.4){\scriptsize 3}
\put(3.2,-.4){\scriptsize 4}
\put(4.8,-.4){\scriptsize 2}
\end{picture}

\setlength{\unitlength}{0.7cm}
\begin{picture}(6,1.2)(-3,-.3)
\put(-3,-0.15){$G_2:$}
\put(0,0){\circle{.2}}
\put(0.3,0){\line(1,0){1}}
\put(0.3,0.07){\line(1,0){1}}
\put(0.3,-0.07){\line(1,0){1}}
\put(0.6,-0.13){$>$}
\put(1.6,0){\circle{.2}}
\put(-0.1,-.4){\scriptsize 3}
\put(1.5,-.4){\scriptsize 2}
\end{picture}


\medskip

All the subsets $\Pi_0\subset \Pi$ giving rise to symplectic nilradicals are characterized by 
Remark \ref{rem.abelian} and Proposition \ref{pro:pro4}. These sets can be identified by reading the Dynkin diagrams.
After a careful inspection of them, 
we obtain the possible $\Pi_0$ for each family and we list them in Table 1.
We assume that $\Pi=\{\gamma_1,\dots,\gamma_n\}$ where 
the numbering of simple roots is as \cite{Hu}[Theorem 11.4]
(and is in correspondence with the Dynkin diagrams given above).

\medskip

\begin{center}
\begin{tabular}{l|l|l}
Split real  & \\[-1mm]
 form of type & $\Pi_0=\{\alpha\}$  & $\Pi_0=\{\alpha,\beta\}$ \\
 \hline\hline
 $A_n,\,n\ge1$  & $\alpha= \gamma_k$, $1\leq  k \le  n$ & $\alpha= \gamma_k$, $\beta= \gamma_{k+1}$, $1 \leq  k \le  n-1$  \\
 \hline
  $B_n,\,n\ge2$ & $\alpha= \gamma_1$, $\gamma_n$ &  $\alpha= \gamma_{n}$, $\beta= \gamma_{n-1}$, $n=2$  \\
  \hline
  $C_n,\,n\ge3$ & $\alpha= \gamma_{n-1}$, $\gamma_n$ &  $\alpha= \gamma_{n-1}$, $\beta= \gamma_{n}$ \\
  \hline
  $D_n,\,n\ge4$ & $\alpha=  \gamma_1$,  $ \gamma_{n-1}$,  $ \gamma_n$ & $\alpha= \gamma_{n-1}$, $\beta= \gamma_n$\\
  \hline
 $E_6$ & $\alpha=  \gamma_1$, $ \gamma_6$ & $\emptyset$  \\
 \hline
  $E_7$ & $\alpha=  \gamma_7$ & $\emptyset$  \\
  \hline
   $E_8$ & $\emptyset$ & $\emptyset$  \\
   \hline
   $F_4$ & $\emptyset$ & $\emptyset$  \\
 \hline
 $G_2$  & $\alpha= \gamma_1$ & $\emptyset$  \\ 
 \hline
\end{tabular}

\smallskip

Table 1: $\Pi_0\subset \Pi$ corresponding to abelian nilradicals or satisfying conditions in Proposition \ref{pro:pro4}.  
\end{center}

\smallskip

Note that not every nilpotent Lie algebra arising from a $\Pi_0$ listed above is necessarily symplectic.
The rest of the paper is devoted to distinguish 
which of the candidates in Table 1 give symplectic nilradicals. 

\begin{rem} \label{rem.abelsymp} Note that if $\mgg$ is of type $A$, $D$ or $E$ and $\mn$ is the nilradical associated to some $\Pi_0=\{\alpha\}$ in Table 1, then $\mn$ is abelian since $\coord_{\alpha}(\gamma_{\max})=1$ in any case (see Remark \ref{rem.abelian}). Thus either $\mn$ or $\R\oplus \mn$ is always symplectic. The same assertion is valid for $\mgg$ of type $B$ and $\Pi_0=\{\gamma_1\}$, and $\mgg$ of type $C$ and $\Pi_0=\{\gamma_n\}$.
\end{rem}

The next result summarizes the nilradicals admitting symplectic structures.
\begin{teo}\label{thm.class}
Let $\mn$ be the nilradical of the parabolic subalgebra of $\mgg$ associated to $\Pi_0$ and assume that $\R\oplus \mn$ or $\mn$ is symplectic. Then $\mgg$ and $\Pi_0$ are one of the following:
\begin{enumerate}
\item $\mgg$ is of type $A$, $D$ or $E$ and $\Pi_0=\{\alpha\}$ as in Table 1, resulting $\mn$ abelian;
\item $\mgg$ is of type $B_n$ and $\Pi_0=\{\gamma_1\}$ or  $\mgg$ is of type $C_n$ and $\Pi_0=\{\gamma_n\}$, resulting $\mn$ abelian;
\item $\mgg$ is of type $A_n$ and $\Pi_0=\{\gamma_k,\gamma_{k+1}\}$ where $k$ is either 
$1$ or $n-1$ and $n\geq 1$, or $k=2$ and $n=4$;
\item $\mgg$ is of type $B_n$ and $\Pi_0=\{\gamma_n\}$ for $n=2$ or $n=3$;
\item $\mgg$ is of type $B_2$ and $\Pi_0=\{\gamma_1,\gamma_2\}$;
\item $\mgg$ is of type $C_n$ and $\Pi_0=\{\gamma_{n-1}\}$ for $n=3$ or $n=4$;
\item $\mgg$ is of type $D_n$ and $\Pi_0=\{\gamma_{n-1},\gamma_{n}\}$ for $n=4$.
\end{enumerate}
\end{teo}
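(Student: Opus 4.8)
The plan is to convert the list in Table 1 into the final classification by treating the abelian and the non-abelian entries separately, and within the non-abelian ones by separating the construction of symplectic forms from the proof of non-existence. Since Remark \ref{rem.abelian} and Proposition \ref{pro:pro4} already guarantee that every symplectic nilradical (or its trivial extension) occurs in Table 1, it remains only to decide each entry. The abelian entries are settled at once: by Remark \ref{rem.abelsymp} they are symplectic after adjoining $\R$ when the dimension is odd, which yields statements (1) and (2).

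First I would make the non-abelian candidates completely explicit. Using the description \eqref{eq.lcs} of the lower central series I would compute, for each $\Pi_0$ in Table 1, the graded pieces $\mg_{(i)}$ together with the brackets $[\mg_{(i)},\mg_{(j)}]\subset\mg_{(i+j)}$ coming from the root structure; since $\mgg_1$ sits in degree $0$, each $\mg_{(i)}$ is a $\mgg_1$-module and the bracket is $\mgg_1$-equivariant, so its shape is essentially forced by Schur's lemma. This identifies every candidate with one of the algebras (2)--(7) of the introduction and, in particular, records the step length $k$ and the dimensions $\dim\rmc^{j}(\mn)$. For the entries that do occur in the theorem I would then exhibit a closed nondegenerate $2$-form (on $\mn$, or on $\R\oplus\mn$ in odd dimension): these are low-dimensional, and the $\mgg_1$-module decomposition of $\Lambda^2\mn^{*}$ produced in the previous step tells us in which summand to look, so writing the forms down is routine.

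For the non-existence half I would combine two obstructions. When $\mn$ is at least $3$-step I would apply Corollary \ref{cor.seriecent} with a value $t\ge2$. The key remark is that, by Theorem \ref{prop.H2_hwv}, every highest weight vector of $H^2(\mn)$ has the form $X_{-\alpha}\wedge X_{-s_{\alpha}(\beta)}$ with $\alpha\in\Pi_0$ simple, hence $o(\alpha)=1$; and because $\mgg_1$ preserves the grading, the whole irreducible module it generates lies in $\mg_{(-1)}\wedge\mg_{(-m)}$ with $m=o(s_{\alpha}(\beta))$. Under the identification \eqref{eq.mm_j=mg_(-j)} the projection $P_t$ lands in $\mg_{(-t)}\wedge\mg_{(-(k-t+1))}$, whose two indices are both $\ge2$ as soon as $t\ge2$; therefore $P_t$ vanishes on each such module, so $P_t\equiv0$ on $H^2(\mn)$. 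Consequently, whenever \eqref{eq.seriecent} holds for some $t\ge2$, Corollary \ref{cor.seriecent} is contradicted and neither $\mn$ nor $\R\oplus\mn$ is symplectic; a direct count of the $\dim\rmc^{j}(\mn)$ shows that this is exactly what happens for the $3$-step entries that must be discarded (for instance $G_2$ and the two-node $C_n$ parabolic). For the remaining, $2$-step, candidates the choice $t\ge2$ is vacuous, and here I would use Lemma \ref{lm1}: by \eqref{eq.lcs} the centre of such an $\mn$ equals its derived algebra $\rmc^{1}(\mn)=\mg_{(k)}$, so the inequality $2\dim\mg_{(k)}\le\dim\mn$ cuts each $2$-step family off and reproduces the numerical restrictions in (3)--(7).

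The hard part is the boundary of Lemma \ref{lm1}: the $2$-step families for which $2\dim\mg_{(k)}\le\dim\mn$ is tight, so that the dimension count alone neither includes nor excludes the algebra. There one cannot avoid analysing the space of closed $2$-forms itself. Concretely, Lemma \ref{lm.kerim} already removes the $\mm_k\wedge\mm_k$ part of a closed form, and such an $\omega$ can be nondegenerate only if its $\mm_1\wedge\mm_k$ component pairs the centre $\mg_{(k)}$ injectively into $\mg_{(1)}^{*}$; imposing $d\omega=0$ on that component turns nondegeneracy into a symbol condition on a map $\mg_{(k)}\to\mg_{(1)}^{*}$ governed by the bracket, and one must show this condition is solvable for the algebras listed but not for their next-larger relatives. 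I expect this final computation, rather than any of the earlier reductions, to be the real obstacle, and I would organise it through the explicit highest weight vectors of Theorem \ref{prop.H2_hwv}, so that the relevant $\mgg_1$-submodule of $H^2(\mn)$, and hence the closed forms available, can be read off directly.
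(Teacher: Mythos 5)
Your plan is sound and, in outline, coincides with the paper's proof: statements (1)--(2) via Remark \ref{rem.abelsymp}, explicit root-space identification of each non-abelian candidate in Table 1, explicit symplectic forms for the surviving entries, Lemma \ref{lm1} for the $2$-step families, and the cohomological obstruction for $3$-step cases. Where you genuinely diverge is in the $3$-step exclusions, and there your route is cleaner than the paper's. The paper applies the ``$t=2$ plus Theorem \ref{prop.H2_hwv}'' argument only to $\Pi_0=\{\gamma_{n-1},\gamma_n\}$ in $C_n$ with $n=3$ (after first invoking Lemma \ref{lm1} to remove $n\ge4$), and it disposes of the $G_2$ nilradical by citing \cite{GB}. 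Your observation --- every Kostant representative lies in $\mg_{(-1)}\wedge\mg_{(-m)}$, the grading is $\mgg_1$-invariant, exact forms are annihilated by $P_t$ by Lemma \ref{lm.kerim}, hence $P_t$ kills \emph{all} closed $2$-forms once $t\ge2$ --- makes the obstruction uniform: \eqref{eq.seriecent} holds at $t=2$ for the two-node $C_n$ nilradical for every $n\ge3$ (so no preliminary use of Lemma \ref{lm1} is needed there), and also for the $G_2$ nilradical ($3+3>5$), while it correctly fails for the $4$-dimensional $B_2$ Borel nilradical ($2+2\not>4$), which must survive. This removes the external citation and handles all discarded $3$-step candidates by a single argument; it is a genuine improvement over the paper's case-by-case treatment.

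One caveat concerns your last paragraph. The heuristic that the hard residual cases are those where \eqref{eq.lm1} is \emph{tight} is miscalibrated: the tight $2$-step cases (the free $2$-step algebra on $3$ generators, single-node $C_4$, two-node $A_4$) all turn out to be symplectic, and are settled by exhibiting forms; whereas the case that actually requires analysing the space of closed $2$-forms is two-node $D_5$, where \eqref{eq.lm1} holds with strict inequality ($12<14$). For that algebra the paper itself only asserts, without proof, that every closed $2$-form is degenerate, so the ``symbol condition'' computation you defer is exactly the step that neither you nor the paper carries out, and it is needed at a point your tightness criterion does not flag. This is not a wrong step in your argument --- your plan does prescribe direct analysis whenever the dimension count fails to decide --- but you should be aware that the dividing line between existence and non-existence among the survivors of Lemma \ref{lm1} is not governed by equality in \eqref{eq.lm1}, and that the one true non-existence computation in this regime remains to be done.
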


\begin{proof} Throughout this proof, given a basis $\mathcal B$ of a Lie algebra, we will denote with
upper indexes the elements of the dual basis of $\mathcal B$. 

The first and second statements are direct consequences of Remark \ref{rem.abelsymp}. 
We consider now the remaining cases $\Pi_0=\{\alpha\}$ in types $B$, $C$ and $G$.
   
\noindent $\bullet$ $B_n$: Here 
 $\Pi=\{\epsilon_1-\epsilon_{2},\dots, \epsilon_{n-1}-\epsilon_{n},\epsilon_n\}$,  $\Pi_0=\{\epsilon_n\}$ 
and thus
$$
\Delta_\mn^+=
\{\epsilon_i: 1\le i\le n\}
\cup
\{\epsilon_j+\epsilon_k: 1\leq j<k\leq n\}.
$$ 
This shows that 
$\mn$ is the free 2-step nilpotent Lie algebra on $n$ generators $\mn_{n,2}$. 
We have $\rmc_1(\mn_{n,2})=\rmc^1(\mn_{n,2})$, with dimension $n(n-1)/2$, and $\dim \mn_{n,2}=n+n(n-1)/2$. 
Since the inequality
$\dim \rmc_1(\mn_{n,2})+\dim\rmc^1(\mn_{n,2})=n(n-1)\leq \dim \mn_{n,2}$ 
holds only if $n\leq 3$, 
it follows from Lemma \ref{lm1} that neither $\mn_{n,2}$ nor $\R\oplus\mn_{n,2}$
admit symplectic structures for $n\ge 4$. 
For $n=3$,  $\mn_{3,2}$ is six-dimensional 
and it is symplectic (see for instance \cite{dB1,DT}). 
On the other hand, $\R\oplus \mn_{1,2}$ 
is abelian of dimension 2 and $\R\oplus\mn_{2,2}$ is a trivial extension of the Heisenberg Lie algebra, 
both well known symplectic Lie algebras.

\medskip

\noindent $\bullet$ $C_n$:  Here 
 $\Pi=\{\epsilon_1-\epsilon_{2},\dots, \epsilon_{n-1}-\epsilon_{n},2\epsilon_n\}$,  $\Pi_0=\{\epsilon_{n-1}-\epsilon_n\}$ 
and thus
$$
\Delta_\mn^+=
\{\epsilon_i-\epsilon_n,\;\epsilon_i+\epsilon_n: 1\leq i\leq n-1\} 
\cup
\{\epsilon_i+\epsilon_j,\;1\leq i\le j\leq n-1\}. 
$$ 
This is a 2-step nilpotent Lie algebra and the sets of roots above correspond to 
linearly independent generators of $\mn/\rmc^1(\mn)$ and $\rmc^1(\mn) =\rmc_1(\mn)$ respectively. 
In this case, the inequality 
$\dim \rmc_1(\mn)+\dim\rmc^1(\mn)=n(n-1)\leq n(n-1)/2+n-1=\dim \mn$ holds only
if $n\leq 4$.
Therefore, it follows from Lemma \ref{lm1} that neither $\mn$ nor $\R\oplus\mn$
admit symplectic structures for $n\ge 5$.
Excluding odd-dimensional Lie algebras we have the next two cases.

If $n=3$, then $\mn$ is the quaternionic Heisenberg Lie algebra, 
it has dimension 7, with center of dimension 3 and 
$ \mathcal B=\{X_1,X_2,Y_1,Y_2\}\cup\{ Z_{11},Z_{12},Z_{22}\}$ is a basis with nonzero brackets 
$$[X_i,Y_j]=[X_j,Y_i]=Z_{ij},\quad 1\leq i\leq j\leq 2$$
and 
$$\omega=Z^{12}\wedge X^1+Z^{11}\wedge X^2+Z^{22}\wedge Y^2+U^1\wedge Y^1$$ 
defines a symplectic form in $\tilde\mn=\R U_1\oplus\mn$. 

If $n=4$,  then $\mn$ has dimension 12 with center of dimension 6. It admits a basis 
$\mathcal B=\{X_i,Y_i:\,1\le i \le 3\}\cup \{Z_{ij}:\,1\leq i\leq j\leq 3\}$ with nonzero brackets 
$$[X_i,Y_j]=[X_j,Y_i]=Z_{ij},\quad 1\leq i\leq j\leq 3$$
and
$$\omega=Z^{12}\wedge X^3+Z^{13}\wedge X^2+Z^{23}\wedge X^1+Z^{11}\wedge Y^1+Z^{22}\wedge Y^3+Z^{33}\wedge Y^2$$ 
defines a symplectic form in $\mn$.

\noindent $\bullet$ $G_2$: Here  
  $\Pi=\{\gamma_1,\gamma_2\}$, $\Pi_0=\{\gamma_1\}$ and 
$$\Delta_\mn=\{\gamma_1,\gamma_1+\gamma_2,2\gamma_1+\gamma_2,3\gamma_1+\gamma_2,3\gamma_1+2\gamma_2\}.$$
It tuns out that $\mn$ is the free 3-step nilpotent Lie algebra on 2 generators. 
It is known \cite{GB} that $\R \oplus\mn$ does not admit symplectic structures.

\medskip

It only remains to  consider the case $\Pi_0=\{\alpha,\beta\}$, which occurs in types $A$, $B$, $C$ or $D$.

\noindent $\bullet$ $A_n$:
In this case $\Pi_0=\{\gamma_k,\gamma_{k+1}\}$ with $k=1,\dots,n-1$ 
and it is straightforward to see that $\mn$ is 2-step nilpotent, $\dim\mn/\mn'=n$ and its center is of dimension $k(n-k)$.
It follows from Lemma \ref{lm1} that symplectic structures may appear only for 
$k=1,n-1$ (any $n\ge2$) or, $k=2$ if $n=4$.

If $k=1$ or $k=n-1$, then 
$\mn$ is odd dimensional and isomorphic to $\R X\ltimes_{\ad_X}(\R^{n-1}\oplus\R^{n-1})$ 
with  
\[
\ad_X=\begin{pmatrix}
0&0\\
I&0
\end{pmatrix},
\]
where $I$ denotes the $(n-1) \times (n-1)$ identity matrix. One easily proves that $\R\oplus \mn$ is symplectic for all $n$.
If $n=4$ and $k=2$, then $\mn$ 
is of dimension 8 with center of dimension 4. It admits a basis 
$\mathcal B=\{X_1,X_2,Y_1,Y_2\}\cup \{Z_{ij}:\,1\leq i,j\leq 2\}$ with nonzero brackets 
$$[X_i,Y_j]=Z_{ij},\quad 1\leq i, j\leq 2 $$
and
$$\omega=Z^{11}\wedge X^1+Z^{22}\wedge Y^2+Z^{12}\wedge X^1+Z^{21}\wedge X^2$$ 
defines a symplectic form in $\mn$.

\medskip

 \noindent $\bullet$ $B_2$: In this case $\mn$ is the nilradical of the Borel subalgebra of $\mgg$, it is 3-step nilpotent, has dimension 4
 and it is symplectic (cf. \cite{dB3}).
 
\medskip

 \noindent $\bullet$ $C_n$: Here 
 $\Pi=\{\epsilon_1-\epsilon_{2},\dots, \epsilon_{n-1}-\epsilon_{n},2\epsilon_n\}$, 
 $\Pi_0=\{\epsilon_{n-1}-\epsilon_{n},2\epsilon_n\}$ and
\begin{eqnarray*}
\Delta_\mn^+&=&
\{2\epsilon_n, \, \epsilon_i-\epsilon_n:\,1\leq i\leq n-1\} 
\,\cup\,
\{\epsilon_i+\epsilon_n:\,1\leq i\leq n-1\}\,
\cup\\
&&\hspace{5,5cm}\{\epsilon_i+\epsilon_j,\;1\leq i\le j\leq n-1\}. 
\end{eqnarray*}
This is a 3-step nilpotent Lie algebra, 
$\dim \rmc^1(\mn)=(n-1)+n(n-1)/2$, $\dim\rmc_1(\mn)=n(n-1)/2$ and $\dim \mn=n+(n-1)+n(n-1)/2$.
Therefore,  $\mn$ satisfies \eqref{eq.lm1} only for $n=3$. 

When $n=3$ the Lie algebra $\mn$ has dimension 8 (thus $\R\oplus\mn$ is not symplectic), $\dim\rmc^1(\mn)=5$
and we claim that it does not admit symplectic structures. 
Otherwise, since \eqref{eq.seriecent} holds for $t=2$, Corollary \ref{cor.seriecent} would
imply that there is a  highest weight vector in $\mgg_{(-2)}\wedge \mgg_{(-2)}$ 
(see \eqref{eq.g_{()}} and \eqref{eq.mm_j=mg_(-j)})  corresponding to a nonzero cohomology class in $H^2(\mn)$. 
However, it follows from Theorem  \ref{prop.H2_hwv} 
that the highest weight vectors corresponding to a nonzero cohomology class in $H^2(\mn)$
belong to $\mgg_{(-1)}\wedge \mn^-$. 
Therefore $\mn$ does not admit symplectic structures.

\medskip

\noindent $\bullet$ $D_n$: Here 
 $\Pi=\{\epsilon_1-\epsilon_{2},\dots, \epsilon_{n-1}-\epsilon_{n},\epsilon_{n-1}+\epsilon_n\}$,
 $\Pi_0=\{\epsilon_{n-1}-\epsilon_{n},\epsilon_{n-1}+\epsilon_n\}$ and 
 $\alpha= \epsilon_{n-1}+\epsilon_{n}$.
 In this case 
\begin{equation*}
\Delta_\mn^+=
\{ \epsilon_i-\epsilon_n,\, \epsilon_i+\epsilon_n:\,1\leq i\leq n-1\} 
\cup
\{\epsilon_i+\epsilon_j,\;1\leq i< j\leq n-1\}. 
\end{equation*} 
These are 2-step nilpotent Lie algebras with $\dim \rmc^1(\mn)=(n-1)(n-2)/2$ and $\dim \mn=2(n-1)+\dim\rmc^1(\mn)$. It follows from Lemma \ref{lm1} that neither $\mn$ nor $\R\oplus\mn$ 
admit symplectic structures unless $n\leq 5$.  
For $n=5$ one can prove that any closed 2-form is degenerate.
If $n=4$,  
then $\mn$ has dimension 9 with center of dimension 3. It admits a basis 
$\mathcal B=\{X_i,Y_i:\,1\le i \le 3\}\cup \{Z_{ij}:\,1\leq i< j\leq 3\}$ with nonzero brackets 
$$[X_i,Y_j]=-[X_j,Y_i]=Z_{ij},\quad 1\leq i< j\leq 3$$
and
$$\omega=Z^{12}\wedge X^3+Z^{13}\wedge X^2+Z^{23}\wedge X^1+Y^{1}\wedge Y^2+Y^{3}\wedge U^1$$ 
defines a symplectic form in $\tilde\mn=\R U_1\oplus \mn$.
\end{proof}


\begin{thebibliography}{99}

\bibitem{BC} {\sc Baues, O. and Cort\'es, V.}, {\em Symplectic Lie groups I-III}, preprint (2013), arXiv:1307.1629 (math.DG). 

\bibitem{BE-GO} {\sc Benson, C. and Gordon, C.}, {\em K\"ahler and symplectic structures on nilmanifolds}, Topology, {\bf 27} (4) (1988), 513--518.

\bibitem{dB1} {\sc del Barco, V.}, {\em Symplectic structures on free nilpotent Lie algebras}, preprint (2011),  arXiv:1111.3280 (math.DG). 

\bibitem{dB3} {\sc del Barco, V.}, {\em Symplectic structures on nilmanifolds: an obstruction for their existences}, J. Lie Theory, {\bf 24} (3) (2014), 889--908. 

\bibitem{DT}{\sc Dotti, I. and Tirao, P.}, {\em Symplectic structures on Heisenberg-type nilmanifolds}, Manuscr. Math., {\bf 102} (3) (2000), 383--401.
 
\bibitem{GJK} {\sc G\'omez, J., Jim\'enez-Merch\'an, A. and Khakimdjanov, Y.}, {\em Symplectic structures on the filiform Lie algebras}, J. Pure Appl. Algebra, {\bf 156} (1) (2001), 15--31.

\bibitem{GB} {\sc Goze, M. and Bouyakoub, A.}, {\em Sur les alg\`ebres de Lie munies d'une forme symplectique}, Rend. Semin. Fac. Sci. Univ. Cagliari, {\bf 57} (1) (1987), 85--97.

\bibitem{GU} {\sc Guan, Z.-D.}, {\em Toward a classification of compact nilmanifolds with symplectic structures}, Int. Math. Res. Not.,  {\bf 22} (2010), 4377-4384.
  
\bibitem{HE} {\sc Helgason, S.}, {\em Differential geometry, {L}ie groups, and symmetric spaces}, Pure and Applied Mathematics (80), Academic Press Inc., 1978.

\bibitem{Hu} {\sc Humphreys, J.}, 
Introduction to Lie algebras and representation theory. 
Graduate Texts in Mathematics, 9. Springer-Verlag, New York-Berlin, 1978. 

\bibitem{KO} {\sc Kostant, B.}, {\em Lie algebra cohomology and the generalized {B}orel-{W}eil theorem}, Ann. of Math. (2), {\bf 74} (1961), 329--387.

\bibitem{Ko1} {\sc Kostant, B.},  
\emph{Root systems for Levi factors and Borel-de Siebenthal theory},
Symmetry and spaces, 129--152, Progr. Math., 278, Birkh\"{a}user Boston, Inc., Boston, MA, 2010. 

\bibitem{MI} {\sc Millionschikov, D.}, {\em Graded filiform Lie algebras and symplectic nilmanifolds}, Geometry, topology, and mathematical physics. Selected papers from S. P. Novikov's seminar held in Moscow, Russia, 2002--2003 (2004), 259--279.


\bibitem{PO-TI} {\sc Pouseele, H. and Tirao, P.}, {\em Compact symplectic nilmanifolds associated with graphs}, J. Pure Appl. Algebra {\bf 213} (9) (2009), 1788--1794.
\end{thebibliography}
\end{document}